\newcommand{\N}{{\mathbb N}}
\newcommand{\R}{{\mathbb R}}
\newcommand{\C}{{\mathbb C}}
\newcommand{\Z}{{\mathbb Z}}
\newcommand{\kS}{{\mathcal S}}
\newcommand{\kK}{{\mathcal K}}
\newcommand{\supp}{\mathop{\rm supp}\nolimits}
\newtheorem{theorem}{Theorem}[section]
 \newtheorem{corollary}[theorem]{Corollary}
 \newtheorem{lemma}[theorem]{Lemma}
 \newtheorem{proposition}[theorem]{Proposition}
 \theoremstyle{definition}
 \newtheorem{definition}[theorem]{Definition}
 \theoremstyle{remark}
 \newtheorem{remark}[theorem]{Remark}
 \newtheorem{example}[theorem]{Example}
 \numberwithin{equation}{section}
\DeclareMathOperator{\loc}{loc}
\author{L. Golinskii, M.~Malamud,  and L. Oridoroga }
\date{}
\begin{document}
\maketitle

\begin{abstract}
The main object under consideration is a class $\Phi_n\backslash\Phi_{n+1}$ of radial
positive definite functions on $\R^n$ which do not admit \emph{radial positive definite
continuation} on $\R^{n+1}$. We find certain necessary and sufficient conditions for the 
Schoenberg representation measure $\nu_n$ of  $f\in \Phi_n$
in order that the inclusion $f\in \Phi_{n+k}$, $k\in\N$, holds.
We show that the class $\Phi_n\backslash\Phi_{n+k}$ is rich enough by giving a number of
examples. In particular, we give a direct proof of $\Omega_n\in\Phi_n\backslash\Phi_{n+1}$, 
which avoids Schoenberg's theorem, $\Omega_n$ is the Schoenberg kernel.  We show that
$\Omega_n(a\cdot)\Omega_n(b\cdot)\in\Phi_n\backslash\Phi_{n+1}$, for $a\not=b$.
Moreover, for the square of this function we prove surprisingly much stronger result:
$\Omega_n^2(a\cdot)\in\Phi_{2n-1}\backslash\Phi_{2n}$.
We also show that any $f\in\Phi_n\backslash\Phi_{n+1}$, $n\ge2$, has infinitely many
negative squares. The latter means that for an arbitrary positive integer $N$ there is a
finite Schoenberg matrix $\kS_X(f) := \|f(|x_i-x_j|_{n+1})\|_{i,j=1}^{m}$,  
$X := \{x_j\}_{j=1}^m \subset\R^{n+1}$, which has at least $N$ negative eigenvalues.
\end{abstract}

\textbf{Mathematics  Subject  Classification (2010)}.
 42A82, 42B10, 47B37 \\

\textbf{Key  words}. Positive definite functions, Schoenberg kernels,
hypergeometric functions, Hankel transform, Bessel functions




\section{Introduction}

Positive definite functions have a long history, being an important chapter in various areas of
harmonic analysis. They can be traced back to papers of Carath\'eodory, Herglotz, Bernstein, culminating
in Bochner's celebrated theorem from 1932--1933.

In this paper we will be dealing primarily with radial positive definite functions. Such functions
have significant applications in probability theory, statistics, and approximation theory,
where they occur as the characteristic functions or Fourier transforms of spherically symmetric probability
distributions. Denote the class of radial positive definite functions on $\R^n$ by $\Phi_n$.

We follow the standard notation for the inner product $(u,v)_n=(u,v)=u_1v_1+\ldots+u_nv_n$ of two vectors
$u=(u_1,\ldots,u_n)$ and $v=(v_1,\ldots,v_n)$ in $\R^n$, and $ |u|_n=|u|=\sqrt{(u,u)} $ for the Euclidean norm of~$u$.

   \begin{definition} Let $n\in \N$.
A real-valued and continuous function $f$ on $\R_+=[0,\infty)$, $f\in C(\R_+)$, is
called a {\it radial positive definite (RPD) function on $\R^n$}, if for an arbitrary
finite set $\{x_1,\dots,x_m\}$, $x_k\in \R^n$, and $\{\xi_1,\dots,\xi_m\}\in\C^m$
\begin{equation}\label{positiv}
\sum_{k,j=1}^{m} f(|x_k-x_j|_n)\xi_j\overline{\xi}_k\ge 0.
\end{equation}
  \end{definition}
In other words, RPD functions $f$ are exactly those, for which
$f(|\cdot|_n)$ are positive definite functions on $\R^n$ or, equivalently, the Schoenberg
matrices $\kS_X(f) := \|f(|x_i-x_j|_{n})\|_{i,j=1}^{m}$  are  positive definite for any
 $X := \{x_j\}_{j=1}^m \subset\R^{n}$.

The characterization of radial positive definite functions is a fundamental result of I. Schoenberg
\cite{Sch38, Sch38_1} (see, e.g., \cite[Theorem~5.4.2]{Akh65}).

  \begin{theorem}\label{schoenbergtheorem}
 A function  $f\in\Phi_n$, $f(0)=1$, if and only if there exists
 a probability measure $\nu$ on $\R_+$ such that
  \begin{equation}\label{schoenberg1}
f(r) = \int_{0}^{\infty}\Omega_n(rt)\,\nu(dt), \qquad r\in \R_+,
   \end{equation}
where the Schoenberg kernel
  \begin{equation}\label{kernel}
  \Omega_n(s):=\Gamma(q+1)\,
  \left(\frac{2}{s}\right)^q\,J_q(s)=
  \sum_{j=0}^{\infty}\frac{\Gamma(q+1)}{j!\,\Gamma(j+q+1)}\,
  \left(-\frac{s^2}{4}\right)^j, \quad q:=\frac{n}2-1,
 \end{equation}
$J_q$ is the Bessel function of the first kind and order $q$. Moreover,
   \begin{equation}\label{fouriersphere}
\Omega_n(|x|) = \int_{S^{n-1}}e^{i(u,x)}\sigma_n(du), \qquad
x\in\R^n,
  \end{equation}
where $\sigma_n$ is the normalized surface measure on the unit sphere $S^{n-1}\subset\R^n$.
  \end{theorem}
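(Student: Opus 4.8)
The plan is to derive the representation \eqref{schoenberg1} from Bochner's theorem, after first disposing of the auxiliary identity \eqref{fouriersphere}, on which both implications rest. To establish \eqref{fouriersphere} I fix $x\in\R^n$ and use the rotation invariance of $\sigma_n$ to align $x$ with a coordinate axis, so that $(u,x)=|x|\cos\theta$, where $\theta$ is the polar angle of $u\in S^{n-1}$. Disintegrating $\sigma_n$ over $\theta$ collapses $\int_{S^{n-1}}e^{i(u,x)}\sigma_n(du)$ to the one-dimensional integral $c_n\int_0^\pi e^{i|x|\cos\theta}(\sin\theta)^{n-2}\,d\theta$ with normalizing constant $c_n=\Gamma(\tfrac n2)/(\sqrt{\pi}\,\Gamma(\tfrac{n-1}2))$; Poisson's integral representation of $J_q$ with $q=\tfrac n2-1$ (hence $2q=n-2$) identifies this with the right-hand side of \eqref{kernel}. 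Applying the result with $x$ replaced by $tx$ yields the working form $\int_{S^{n-1}}e^{it(u,x)}\sigma_n(du)=\Omega_n(t|x|)$ for $t\ge0$.

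The sufficiency is then a direct verification. Given \eqref{schoenberg1} with a measure $\nu$ on $\R_+$, I substitute $\Omega_n(t|x_k-x_j|)=\int_{S^{n-1}}e^{it(u,\,x_k-x_j)}\sigma_n(du)$ into the quadratic form of \eqref{positiv} and interchange summation with integration (legitimate since $\nu$ is finite and the integrands are bounded) to obtain
\[
\sum_{k,j=1}^m f(|x_k-x_j|)\,\xi_j\overline{\xi_k}=\int_0^\infty\!\!\int_{S^{n-1}}\Big|\sum_{k=1}^m\overline{\xi_k}\,e^{it(u,x_k)}\Big|^2\sigma_n(du)\,\nu(dt)\ge0,
\]
so $f\in\Phi_n$. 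Since $\Omega_n(0)=1$ by \eqref{kernel}, the condition $f(0)=1$ reads $\nu(\R_+)=1$, i.e.\ $\nu$ is a probability measure.

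For the necessity, suppose $f\in\Phi_n$ with $f(0)=1$. Then $f(|\cdot|)$ is continuous and positive definite on $\R^n$, so Bochner's theorem supplies a nonnegative measure $\mu$ on $\R^n$ with $f(|x|)=\int_{\R^n}e^{i(x,\xi)}\mu(d\xi)$ and $\mu(\R^n)=f(0)=1$. The crucial point is that $\mu$ inherits the rotational symmetry of $f$: for every orthogonal $R$ the pushforward $R_*\mu$ has Fourier transform $\int_{\R^n}e^{i(x,R\xi)}\mu(d\xi)=f(|R^{-1}x|)=f(|x|)$, whence $R_*\mu=\mu$ by the uniqueness of the representing measure in Bochner's theorem. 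A rotation-invariant probability measure admits a polar disintegration: letting $\nu$ be the pushforward of $\mu$ under $\xi\mapsto|\xi|$, one has $\int_{\R^n}g\,d\mu=\int_0^\infty\big(\int_{S^{n-1}}g(tu)\,\sigma_n(du)\big)\nu(dt)$. Taking $g(\xi)=e^{i(x,\xi)}$ and inserting the identity from the first step gives $f(|x|)=\int_0^\infty\Omega_n(t|x|)\,\nu(dt)$, which is \eqref{schoenberg1} with $r=|x|$.

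The main obstacle lies in the necessity direction: it rests on Bochner's theorem and, just as essentially, on the uniqueness of the Bochner measure, which is what legitimizes the symmetrization $R_*\mu=\mu$. The polar disintegration requires supporting care---measurability of the radial decomposition and an application of Fubini's theorem, both routine for a finite $\mu$ with bounded integrand. By comparison, \eqref{fouriersphere} is a classical special-function computation, and the sufficiency direction is an elementary interchange of sum and integral once \eqref{fouriersphere} is in hand.
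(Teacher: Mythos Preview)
The paper does not supply its own proof of this theorem: it is stated as a classical result of Schoenberg, with references to \cite{Sch38, Sch38_1} and \cite[Theorem~5.4.2]{Akh65}, and the exposition moves on immediately. There is therefore nothing in the paper to compare your argument against.

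That said, your proof is correct and follows the standard modern route: establish \eqref{fouriersphere} via rotation invariance and Poisson's integral for $J_q$, deduce sufficiency by a direct nonnegativity computation, and for necessity invoke Bochner's theorem, use uniqueness of the Bochner measure to conclude rotation invariance of $\mu$, and then pass to the radial pushforward $\nu$ by polar disintegration. All the steps you flag as needing care (uniqueness in Bochner, Fubini for the disintegration) are genuine and handled appropriately.
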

So it is not surprising that various properties of the Bessel functions (recurrence and differential relations, bounds and asymptotics, integrals) come up repeatedly throughout the paper. The first three functions $\Omega_n$, $n=1,2,3,$ can be computed as
 \begin{equation}\label{3.13}
   \Omega_1(s)=\cos s,\quad  \Omega_2(s)=J_0(s),\quad  \Omega_3(s)=\frac{{\rm sin} s}{s}\,.
 \end{equation}

It is well known that the classes $\Phi_n$ are nested, and inclusion $\Phi_{n+1}\subset\Phi_n$ is proper for any $n\in \N$. The result is mentioned in the pioneering paper of Schoenberg \cite{Sch38} (without proof), and then duplicated in Akhiezer's book \cite{Akh65} (and in a number of later papers and books). The main goal of
Section 2 is to study the classes $\Phi_n\backslash\Phi_{n+1}$. We start out with two proofs of the known fact $\Omega_n\in\Phi_n\backslash\Phi_{n+1}$, the first one is based on the Schoenberg theorem and some rudiments
of the Stieltjes moment problem. The second one is direct and has nothing to do with Schoenberg's
theorem. We just manufacture a set $X=\{x_j\}_{j=1}^{n+3}\subset\R^{n+1}$ such that the corresponding matrix
$\kS_X(\Omega_n)=\|\Omega_n(|x_i-x_j|)\|_{i,j=1}^{n+3}$ called the {\it Schoenberg matrix} (see \cite{gmo1} for a detailed account of this object) has at least one negative eigenvalue (Proposition \ref{negativeeigen}).

Let us emphasize that the inclusion $f\in \Phi_n\backslash\Phi_{n+1}$  means that $f$ is RPD
function on $\R^n$ but does not admit \emph{radial} positive definite continuation on
$\R^{n+1}$, while positive definite continuations obviously exist.

As it turns out, the class $\Phi_n\backslash\Phi_{n+1}$ is rich enough. We give some
sufficient conditions in terms of Schoenberg's measure $\nu_{n}=\nu_n(f)$ for $f$ to
belong to $\Phi_n\backslash\Phi_{n+1}$. A key ingredient here is the following relation
called a {\it transition formula}.

Let $f\in\Phi_{n}$, $n\ge2$. Then $f\in\Phi_m$ for $m=1,2,\ldots,n-1$, and according to
Schoenbeerg's theorem $f$ admits representation \eqref{schoenberg1} with some measure
$\nu_m(f)$. We show (see Theorem \ref{prop_measure_relation}) that the relation between
the measures $\nu_n$ and $\nu_m$ is given by
  \begin{equation}\label{transfor}
\nu_m(dx)=p_m(x)\,dx, \quad p_m(x) =
\frac{2x^{m-1}}{B\bigl(\frac{m}2,\frac{n-m}2\bigr)}\,\int_x^\infty
\Bigl(1-\frac{x^2}{u^2}\Bigl)^{\frac{n-m}2-1}\,\,\frac{\nu_n(du)}{u^m}\,, 
\end{equation}
where $B(a,b)$ is the Euler beta function. So each $\nu_m$ is absolutely continuous and
$\nu_m\{(0,\varepsilon)\}>0$ for any $\varepsilon>0$. In other words, if $\nu_n(f)$ is either
not pure absolutely continuous (contains a singular component) or
$\nu_n\{(0,\varepsilon)\}=0$ for some $\varepsilon>0$, then $f\notin\Phi_{n+1}$.

Besides, we investigate the smoothness and decaying properties of the distribution function generated
by the Schoenberg's measure $\nu_{m}$ in \eqref{transfor}. For instance, it is shown in
Theorem \ref{prop_smoothness}  that $\nu_m\in AC^{[k/2]}_{\loc}(\R_+)$ and moreover,
$x^j p^{(j)}_m(x)\in L^p(\Bbb R_+)$, $j=0,1,\ldots, [k/2]-1$, $k=n-m$, whenever $\nu_n$
is absolutely continuous, $\nu_n = p_{n}\,dx$, and $p_{n}\in L^p(\Bbb R_+)$
for some $1\le p<\infty$.

Clearly, $\Omega_n(a\cdot)\in\Phi_n$ for each $a>0$. Since the Schur (entrywise) product of two
nonnegative matrices is again a nonnegative matrix, the product of two Schoenberg's kernels $\Omega_n(a\cdot)\Omega_n(b\cdot)\in\Phi_n$, $a,b>0$. Moreover, the following
result is proved in Section 4.

\begin{theorem}\label{schoenkern}
For $n\in\N$
\begin{enumerate}
  \item [\em (i)] $\Omega_n(at)\Omega_n(bt)\in\Phi_n\backslash\Phi_{n+1}$, \ \ $a\not=b$;
  \item [\em (ii)] $\Omega_n^2(t)\in\Phi_{2n-1}\backslash\Phi_{2n}$.
  \end{enumerate}
\end{theorem}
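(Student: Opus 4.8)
The plan is to treat the two inclusions separately, in each case producing the Schoenberg measure explicitly and reading off non-membership from the transition formula \eqref{transfor} (more precisely, from its consequence: if the measure in dimension $n$ vanishes near the origin or has a singular part, the function is not in $\Phi_{n+1}$). For (i), membership $\Omega_n(a\cdot)\Omega_n(b\cdot)\in\Phi_n$ is already recorded above via the Schur product, so only the representing measure must be located. I would use the rescaled form of \eqref{fouriersphere}: writing $\mu_a$ for the normalized surface measure of the sphere of radius $a$ in $\R^n$, one has $\Omega_n(a|x|)=\int_{\R^n}e^{i(u,x)}\mu_a(du)$, hence $\Omega_n(a|x|)\Omega_n(b|x|)=\widehat{\mu_a*\mu_b}(x)$. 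The convolution $\mu_a*\mu_b$ is a rotation-invariant probability measure on $\R^n$ supported on the annulus $|a-b|\le|y|\le a+b$, so its radial distribution $\nu_n$ — which by uniqueness of the Schoenberg representation is exactly the measure of the product — is supported in $[\,|a-b|,a+b\,]$. Since $a\ne b$ we have $|a-b|>0$, so $\nu_n\{(0,\varepsilon)\}=0$ for small $\varepsilon$, and Theorem \ref{prop_measure_relation} gives $\Omega_n(a\cdot)\Omega_n(b\cdot)\notin\Phi_{n+1}$.

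For (ii) the convenient device is the one-dimensional profile. Writing $\mu^{(m)}$ for the law of a single coordinate of a uniform point on $S^{m-1}$, so that $d\mu^{(m)}(s)\propto(1-s^2)^{(m-3)/2}\,ds$ on $[-1,1]$ and $\Omega_m(r)=\int_{-1}^1 e^{irs}\,d\mu^{(m)}(s)$, membership $\Omega_n^2\in\Phi_m$ is equivalent (via Schoenberg's theorem and Fourier uniqueness) to writing the fixed symmetric density $h:=\mu^{(n)}*\mu^{(n)}$ on $[-2,2]$ as a nonnegative scale mixture $h(u)=\int_0^2 t^{-1}\mu^{(m)}(u/t)\,\nu(dt)$ of the arch densities $\mu^{(m)}$. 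A short computation shows $h$ has edge exponent exactly $n-2$, i.e. $h(u)\asymp(2-u)^{\,n-2}$ as $u\to2^-$. As a warm-up and consistency check, the classical product formula $J_q(r)^2=\tfrac2\pi\int_0^{\pi/2}J_{2q}(2r\cos\theta)\,d\theta$ rewrites, after the substitution $t=2\cos\theta$, as $\Omega_n^2\in\Phi_{2n-2}$ with the explicit positive density $p_{2n-2}(t)\propto t^{\,n-2}(4-t^2)^{-1/2}$ on $[0,2]$; so the real content is to gain the one extra dimension.

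To prove $\Omega_n^2\in\Phi_{2n-1}$ I would invert the transition formula \eqref{transfor} with $m=2n-2$, $N=2n-1$ (a half-order Abel equation, since $(N-m)/2=\tfrac12$), feeding in the explicit $p_{2n-2}$ above and solving for $\nu_{2n-1}$. The inversion produces $\nu_{2n-1}$ as an explicit density on $[0,2]$ expressible through a ${}_2F_1$ (equivalently through a Weber--Schafheitlin/Sonine Bessel integral), and the crux — indeed the main obstacle of the whole theorem — is to prove that this density is \emph{nonnegative}; equivalently, that $h$ can be demixed against the arch $\mu^{(2n-1)}$, whose edge exponent $(2n-1-3)/2=n-2$ matches that of $h$ exactly. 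I expect positivity to follow from a higher-order monotonicity property of $h$; for $n=2$ it reduces to the elementary fact that the arcsine self-convolution is decreasing on $(0,2]$, which already yields $J_0^2\in\Phi_3$.

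Finally, $\Omega_n^2\notin\Phi_{2n}$ is the sharpness statement, and here the edge-exponent bookkeeping is decisive and clean. The arch $\mu^{(2n)}$ has edge exponent $(2n-3)/2$, which is \emph{strictly larger} than $n-2$ for $n\ge2$; hence every nonnegative scale mixture $\int_0^2 t^{-1}\mu^{(2n)}(u/t)\,\nu(dt)$ vanishes at least like $(2-u)^{(2n-3)/2}$ as $u\to2^-$ (allowing $\nu$ to blow up near $t=2$ only forces $\nu$ to become non-integrable before the exponent can be lowered to $n-2$). Since the target profile $h$ decays only like $(2-u)^{\,n-2}$, no nonnegative $\nu$ can reproduce it, so $\Omega_n^2\notin\Phi_{2n}$, which also explains why $2n-1$ is exactly optimal. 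The case $n=1$ is handled directly, as $\Omega_1^2(r)=\cos^2 r=\tfrac12+\tfrac12\cos 2r$ has the purely atomic Schoenberg measure $\tfrac12(\delta_0+\delta_2)$, which is singular, whence $\cos^2\notin\Phi_2$.
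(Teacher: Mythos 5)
Part (i) of your proposal is essentially the paper's own argument: Schur product for membership, the convolution of the two spherical measures supported on the annulus $\{|a-b|\le|x|\le a+b\}$, and Theorem \ref{prop_measure_relation} applied to a Schoenberg measure vanishing near the origin. Your warm-up for (ii) also coincides with the paper's starting point, namely the explicit representation $\Omega_n^2(t)=\int_0^2\Omega_{2n-2}(tx)p_{2n-2}(x)\,dx$ with $p_{2n-2}(x)=C_n x^{n-2}(4-x^2)^{-1/2}$, and your treatment of $n=1$ via $\cos^2 r=\tfrac12+\tfrac12\cos 2r$ is exactly theirs.

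The genuine gap is in the membership $\Omega_n^2\in\Phi_{2n-1}$, which is the hard half of (ii). You propose to invert the one-step ($k=1$, half-order Abel) transition formula from $m=2n-2$ to $2n-1$ and then to prove that the resulting density is nonnegative, but you explicitly defer that positivity to an unproven ``higher-order monotonicity property of $h$,'' checking it only for $n=2$. As you yourself say, this positivity \emph{is} the main obstacle of the theorem, so leaving it as an expectation means the statement is not proved for $n\ge 3$. The paper circumvents the half-order inversion entirely by a ``one step backward, two steps forward'' device: from $p_{2n-2}$ it first computes $p_{2n-3}$ by the (always valid, integral-only) $1$-step transition formula, obtaining the closed form $p_{2n-3}(x)=C'_nx^{2n-4}F\bigl(\tfrac{n-1}2,\tfrac12;1;\tfrac{4-x^2}4\bigr)$; it then applies the $2$-step formula \eqref{twoschoen2} with $m=2n-3$, which requires only one ordinary differentiation of $x^{4-2n}p_{2n-3}(x)$. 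The derivative formula $F'(a,b;c;z)=\tfrac{ab}{c}F(a+1,b+1;c+1;z)$ makes the resulting density manifestly positive (a hypergeometric series with positive coefficients evaluated on $[0,1)$), and the boundary term produces an atom $B_n\delta\{2\}$ in $\nu_{2n-1}$. If you want to complete your route, you would need to supply precisely this positivity; the cleanest repair is to adopt the backward-forward step.

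On the sharpness $\Omega_n^2\notin\Phi_{2n}$ your route genuinely differs from the paper's and is sound in outline. The paper reads non-membership off the atom of $\nu_{2n-1}$ at $x=2$ via Theorem \ref{prop_measure_relation} (the next measure would have to be absolutely continuous). You instead compare edge exponents of the one-dimensional profiles: $h=\mu^{(n)}*\mu^{(n)}$ behaves like $(2-u)^{n-2}$ at $u=2$, while any nonnegative scale mixture of the arch $\mu^{(2n)}$ (whose representing measure is necessarily supported in $[0,2]$ and finite) is $O\bigl((2-u)^{(2n-3)/2}\bigr)$, and $(2n-3)/2>n-2$. This is a nice self-contained obstruction that does not require knowing $\nu_{2n-1}$ at all, and it transparently explains why $2n-1$ is optimal; its cost is that it says nothing about the membership half, which still has to be established independently.
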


The problem we deal with in Section \ref{secinf} concerns the number of negative squares
of  a function  $f\in\Phi_n\backslash\Phi_{n+1}$. Namely, since such $f$ does not admit
RPD continuation on $\R^{n+1}$,  the quadratic forms
\eqref{positiv} associated with  $f(|\cdot|_{n+1})$ in place of $f(|\cdot|_{n})$ might
have negative squares. We are interested in the maximal number of negative squares of
such  forms.
One reformulates this concept in terms of the maximal  number of negative  eigenvalues
of the corresponding Schoenberg's matrices $\kS_X(f) := \|f(|x_i-x_j|_{n+1})\|_{i,j=1}^{m}$
with $X= \{x_j\}_{j=1}^m \subset\R^{n+1}$. To  be precise, given a real-valued and continuous 
function $g$ on $\R_+$ and a finite set
$X\subset\R^n$, denote by $\kappa^-(g, X)$ a number of negative eigenvalues of the
finite Schoenberg matrix $\kS_X(g)$, and by
  $$
\kappa_n^-(g):= \sup \{\kappa^-(g,X): X\  \text{runs through all finite subsets of}\
\R^n\}.
  $$
Certainly, $\kappa_n^-(g)=0$ for $g\in\Phi_n$. The question is whether $\kappa_n^-(g)$
can be finite for $g\notin\Phi_n$.

\begin{theorem}\label{th1}
Let $g$ be a continuous function on $\R_+$ such that the limit
\begin{equation}\label{limit}
\lim_{t\to\infty} g(t)=g(\infty)\ge0
\end{equation}
exists. If $\kappa^-(g,Y)\ge1$ for some set $Y\in\R^m$, then $\kappa_m^-(g)=+\infty$. In particular,
  \begin{equation}\label{infinitesquares}
\kappa_{n+1}^-(g)=+\infty \quad {\rm for\ each}\quad g\in\Phi_n\backslash\Phi_{n+1}, \quad n\ge2.
\end{equation}
\end{theorem}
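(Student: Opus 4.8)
The plan is to reduce \eqref{infinitesquares} to the first assertion and then to prove the latter by a ``many far-separated copies'' argument. For the reduction, take $g\in\Phi_n\setminus\Phi_{n+1}$ with $n\ge2$. By Schoenberg's theorem $g(r)=\int_0^\infty\Omega_n(rt)\,\nu(dt)$ for a probability measure $\nu$, and since $q=\frac n2-1>-\frac12$ the standard Bessel asymptotics give $\Omega_n(s)\to0$ as $s\to\infty$ while $|\Omega_n|\le\Omega_n(0)=1$; dominated convergence then yields $g(\infty)=\nu(\{0\})\ge0$, so \eqref{limit} holds. On the other hand $g\notin\Phi_{n+1}$ means exactly that $\kS_Y(g)$ fails to be positive semidefinite---hence has a negative eigenvalue, i.e.\ $\kappa^-(g,Y)\ge1$---for some finite $Y\subset\R^{n+1}$. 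Granting the first assertion with $m=n+1$, this gives \eqref{infinitesquares}.

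For the first assertion I would fix $Y=\{y_1,\dots,y_k\}\subset\R^m$ with $A:=\kS_Y(g)$ having a negative eigenvalue $\lambda<0$ and a unit eigenvector $v\in\R^k$, and set $s:=\sum_i v_i$. Given any $N\in\N$, I would place points $a_1,\dots,a_N\in\R^m$ pairwise at distance at least $R$ (say along a line) and form $X:=\bigcup_{p=1}^N(Y+a_p)$. Splitting $\kS_X(g)$ into $N\times N$ blocks of size $k$, each diagonal block equals $A$ exactly, because translation preserves the distances inside a copy, while an off-diagonal $(p,q)$-block has entries $g(|y_i-y_j+a_p-a_q|_m)$ with $|a_p-a_q|_m\ge R$. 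Since $Y$ is bounded, these arguments exceed $R-\operatorname{diam}(Y)$ and \eqref{limit} forces every off-diagonal entry to tend to $g(\infty)$ as $R\to\infty$. Thus I can write $\kS_X(g)=M+E$, where $M$ has diagonal blocks $A$ and off-diagonal blocks $g(\infty)\,\mathbf{1}\mathbf{1}^{\top}$ (with $\mathbf{1}\in\R^k$ the all-ones vector) and $\|E\|\to0$ as $R\to\infty$.

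The crux is a quadratic-form estimate for $M$ on test vectors $\tilde v:=(w_1v,\dots,w_Nv)$, $w\in\R^N$. Using $v^{\top}Av=\lambda$ and $v^{\top}\mathbf{1}\mathbf{1}^{\top}v=s^2$ one computes
\begin{equation*}
\tilde v^{\top}M\tilde v=\lambda\,|w|^2+g(\infty)\,s^2\Bigl[\bigl(\textstyle\sum_p w_p\bigr)^2-|w|^2\Bigr].
\end{equation*}
Here is where the hypothesis $g(\infty)\ge0$ enters: restricting to the hyperplane $\sum_p w_p=0$ kills the (possibly positive) square term and leaves
\begin{equation*}
\tilde v^{\top}M\tilde v=\bigl(\lambda-g(\infty)s^2\bigr)\,|w|^2\le\lambda\,|w|^2=\lambda\,|\tilde v|^2<0 .
\end{equation*}
Hence $M$ is negative definite, with Rayleigh quotient at most $\lambda$, on the $(N-1)$-dimensional subspace $\mathcal V:=\{(w_1v,\dots,w_Nv):\sum_p w_p=0\}$.

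Finally I would transfer this to $\kS_X(g)=M+E$: for $\tilde v\in\mathcal V$ one has $\tilde v^{\top}(M+E)\tilde v\le(\lambda+\|E\|)\,|\tilde v|^2$, so taking $R$ large enough that $\|E\|<|\lambda|$ makes $\kS_X(g)$ negative definite on $\mathcal V$. By the Courant--Fischer min-max principle $\kS_X(g)$ then has at least $\dim\mathcal V=N-1$ negative eigenvalues, i.e.\ $\kappa^-(g,X)\ge N-1$; letting $N\to\infty$ gives $\kappa_m^-(g)=+\infty$. The main obstacle I anticipate is precisely that the negative eigenvector $v$ need not be orthogonal to $\mathbf{1}$ (that is, $s\ne0$), which is why one cannot simply read off $N$ copies of $\lambda$ from the block-diagonal of $M$; passing to the hyperplane $\sum_p w_p=0$ removes the offending positive term at the negligible cost of a single dimension.
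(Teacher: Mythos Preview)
Your proof is correct and follows essentially the same ``many far-separated copies'' strategy as the paper: translate $Y$ to $N$ widely spaced locations, observe that the diagonal blocks of $\kS_X(g)$ are all $A=\kS_Y(g)$ while the off-diagonal blocks converge to constants, and conclude by a perturbation argument. The reduction of \eqref{infinitesquares} to the first assertion via dominated convergence is also identical.

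The one genuine difference is in how the case $g(\infty)>0$ is handled. The paper splits into cases: it first treats $g(\infty)=0$ (where the off-diagonal blocks are small and the block-diagonal matrix already has $N$ negative eigenvalues), and then reduces the general case to this one by passing to $h=g-g(\infty)$ and invoking the rank-one inequality $\kS_Z(g)=\kS_Z(h)+g(\infty)E_l$. Your argument instead keeps $g$ and works directly with the limit matrix $M$ (diagonal blocks $A$, off-diagonal blocks $g(\infty)\mathbf{1}\mathbf{1}^\top$), testing it on the $(N-1)$-dimensional subspace $\{(w_1v,\dots,w_Nv):\sum_p w_p=0\}$; the hyperplane constraint annihilates the rank-one positive contribution in one stroke, using $g(\infty)\ge0$ exactly where the paper uses it. Both routes cost one dimension and yield $\kappa^-(g,X)\ge N-1$; yours is a bit more streamlined since it avoids the case split, while the paper's reduction makes the role of the rank-one perturbation more explicit.
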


The case $n=1$ is more subtle, since $\Omega_1(t)=\cos t$ has no limit at infinity.
Nonetheless we believe that the conclusion in \eqref{infinitesquares} holds in this case as well.

{\bf{Conjecture.}} {\it For each function $f\in\Phi_1\setminus \Phi_2$ the relation
$\kappa_2^-(f)=+\infty$ holds.}

We confirm this conjecture for $f\in\Phi_1\backslash\Phi_2$ under certain additional
assumptions on the Schoenberg measure $\nu_1(f)$ (see \eqref{nontozero}).

In connection with  relation \eqref{infinitesquares} we note that 
functions with {\it finite} number of negative squares (indefinite analogs of positive definite functions) 
appear naturally in various extension problems. According to Theorem \ref{th1} this is not the case
for functions $f\in\Phi_n\backslash\Phi_{n+1}$.

We note also that indefinite analogs of positive definite and more general classes of functions have 
thoroughly been investigated by M. Krein and H. Langer (see \cite{KreLan14} and references therein).

\section{Functions from $\Phi_n\backslash\Phi_{n+1}$: algebraic approach}\label{schoennegativeeigen}

As we mentioned above the classes $\Phi_n$ are nested, and inclusion $\Phi_{n+1}\subset\Phi_n$ is proper for any $n\in \N$. For some examples of functions $f\in\Phi_n\backslash\Phi_{n+1}$ see, e.g., \cite[Remark 3.5]{GMZ11}, \cite{zas2000}.

There is a simple way to show that $\Omega_n\notin\Phi_{n+1}$, which relies on some basics from the
Stieltjes moment problem. We sketch the proof without getting into much details. Assume on the contrary that $\Omega_n\in\Phi_{n+1}$, and so
$$ \Omega_n(r)=\int_0^\infty \Omega_{n+1}(rt)\,\sigma(dt), \qquad \sigma(\R_+)=1, $$
As the function in the left hand side is an even entire function, it is easy to see that $\sigma$ has
all moments finite. By using the Taylor series expansions \eqref{kernel} for both $\Omega_n$ and $\Omega_{n+1}$
we come thereby to the following moment problem
$$ s_0=1, \quad s_{2k}:=\int_0^\infty t^{2k}\sigma(dt)=\frac{(n+1)(n+3)\ldots(n+2k-1)}{n(n+2)\ldots(n+2k-2)}\,, \qquad k=1,2,\ldots,  $$
in particular,
$$ s_2=\frac{n+1}{n}, \qquad s_4=\frac{(n+1)(n+3)}{n(n+2)}. $$
But such moment problem has no solution, since, e.g.,
$$ \det\begin{bmatrix}
   s_0& s_2\\
   s_2& s_4 \end{bmatrix}=\frac{(n+1)(n+3)}{n(n+2)}-\biggl(\frac{n+1}{n}\biggr)^2<0. $$

Our goal here is to suggest a direct proof of the relation $\Omega_n\notin\Phi_{n+1}$, which avoids Schoenberg's theorem. In other word, we construct an explicit finite set $X\subset\R^{n+1}$ so that the Schoenberg matrix $\kS_X(\Omega_n)$ has at least one negative eigenvalue, whereas $\kS_Y(\Omega_n)\ge0$ for each finite set $Y\subset\R^{n}$. Moreover, we show that there is no upper bound for the number of negative eigenvalues of $\kS_X(\Omega_n)$ for an appropriate choice of the set $X$.

Let $E_{l,m}$ be a $l\times m$ matrix composed of $1$'s, that is,
$(E_{l,m})_{ij}=1$, $1\le i\le l$, $1\le j\le m$, $E_m:=E_{m,m}$. It is clear that $rank\,E_m=1$ and its spectrum
$\sigma(E_m)=\{0^{(m-1)}, m\}$. Let $I_p$ be the unit matrix of order $p$.

\begin{lemma}\label{signature}
Let
\begin{equation}\label{linalg}
\kS:=\begin{bmatrix}
 A&B\\ B^*&I_k
    \end{bmatrix}, \end{equation}
where a $p\times p$ matrix $A$ and a $q\times k$ matrix $B$ are defined as
\begin{equation}\label{ABC}
A =\begin{bmatrix}
1&a&a&\ldots&a \\
a&1&a&\ldots&a \\
\vdots& & & & \vdots \\
a&a&a&\ldots& 1 \end{bmatrix}=(1-a)I_{p}+aE_{p}, \qquad
B=\begin{bmatrix}
b&b&\ldots&b \\
\vdots& & &\vdots \\
b&b&\ldots&b \end{bmatrix} =bE_{p,k}
\end{equation}
with real entries $a$, $b$. $\kS$ has at least one negative eigenvalue if and only if either
of two inequalities holds
\begin{equation}\label{smallest}
a>1, \qquad \lambda:=1+(p-1)a-kpb^2<0.
\end{equation}
\end{lemma}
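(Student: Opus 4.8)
The plan is to reduce the $(p+k)\times(p+k)$ block matrix $\kS$ to its Schur complement relative to the invertible corner $I_k$, whose spectrum is completely transparent. Since $I_k>0$, I would use the explicit congruence
$$
\kS = L\,\diag(A-BB^*,\,I_k)\,L^*, \qquad
L=\begin{bmatrix} I_p & B \\ 0 & I_k \end{bmatrix},
$$
which one checks by direct block multiplication (the cross terms combine as $(A-BB^*)+BB^*=A$). By Sylvester's law of inertia, $\kS$ is congruent to $\diag(A-BB^*,I_k)$, and since $I_k$ contributes only positive eigenvalues, the number of negative eigenvalues of $\kS$ equals that of the Schur complement $A-BB^*$. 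Thus $\kS$ has at least one negative eigenvalue if and only if $A-BB^*$ does.

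Next I would compute $A-BB^*$ explicitly. From \eqref{ABC} we have $B=bE_{p,k}$, so $BB^*=b^2E_{p,k}E_{k,p}$; since each entry of $E_{p,k}E_{k,p}$ equals $\sum_{l=1}^{k}1=k$, this gives $E_{p,k}E_{k,p}=kE_p$ and hence $BB^*=kb^2E_p$. Substituting $A=(1-a)I_p+aE_p$ yields
$$
A-BB^* = (1-a)I_p + (a-kb^2)E_p.
$$

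Now the spectrum is immediate from $\sigma(E_p)=\{0^{(p-1)},p\}$. On the $(p-1)$-dimensional kernel of $E_p$ the matrix acts as $(1-a)I$, contributing the eigenvalue $1-a$ with multiplicity $p-1$; on the one-dimensional span of the all-ones vector it contributes $(1-a)+(a-kb^2)p = 1+(p-1)a-kpb^2 = \lambda$. Therefore the eigenvalues of $A-BB^*$ are exactly $1-a$ (of multiplicity $p-1$) and $\lambda$ (simple), and at least one of them is negative precisely when $a>1$ or $\lambda<0$. Combined with the first step, this is the asserted equivalence.

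I do not expect a genuine obstacle here: the computation is routine once the reduction is in place. The only point deserving care is that we must track the actual \emph{count} of negative eigenvalues through the reduction, not merely whether positive semidefiniteness fails; this is exactly what the congruence above secures via Sylvester's law (equivalently, one could invoke Haynsworth inertia additivity for the Schur complement). A minor caveat is the degenerate case $p=1$, where the multiplicity $p-1$ of the eigenvalue $1-a$ is zero and only $\lambda=1-kb^2$ survives; the stated criterion is understood for $p\ge2$, where the factor $a$ genuinely enters.
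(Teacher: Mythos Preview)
Your proposal is correct and follows essentially the same route as the paper: the paper uses the identical congruence $\kS=L\,\diag(A-BB^*,I_k)\,L^*$, computes $A-BB^*=(1-a)I_p+(a-kb^2)E_p$ via $E_{p,k}E_{k,p}=kE_p$, and reads off the spectrum $\{(1-a)^{(p-1)},\,1+(p-1)a-kpb^2\}$. Your added remarks on Sylvester's law and the degenerate case $p=1$ are sound but do not depart from the paper's argument.
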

\begin{proof}
A general linear algebraic identity for the block matrix $\kS$
\begin{equation}\label{blockdiagonal}
\kS
= \begin{bmatrix}
 I_p&B\\ 0&I_k
    \end{bmatrix} \begin{bmatrix}
 A-BB^*&0\\ 0&I_k
    \end{bmatrix} \begin{bmatrix}
 I_p&0\\B^*&I_k
    \end{bmatrix}
\end{equation}
shows that $\kS$ has the same number of negative eigenvalues as the block diagonal matrix in the right
hand side of \eqref{blockdiagonal}, or the same number of negative eigenvalues as the matrix $A-BB^*$.

In our particular case the spectrum of the latter matrix can be computed explicitly. Indeed, as
$$ E^*_{l,m}=E_{m,l}, \qquad E_{m,l}E_{l,m}=lE_m, $$
then by \eqref{ABC}
\begin{equation*}
D=A-BB^*=(1-a)I_p+(a-kb^2)E_p, \qquad \sigma(D)=\{(1-a)^{(p-1)}, 1+(p-1)a-kpb^2\},
\end{equation*}
and the result follows.  \end{proof}

The matrix $\kS$ \eqref{linalg}--\eqref{ABC} (with $k=1$) will arise as the Schoenberg matrix
$\kS_X(f)$ for a certain configuration  $X$ in $\R^{n+1}$.

\begin{proposition}\label{negativeeigen}
Let $f\in C(\R_+)$ be a real-valued function, $f\le1$, analytic at the origin with the Taylor series expansion
\begin{equation}\label{taylor}
f(z)=\sum_{j=0}^\infty (-1)^j a_j z^{2j}, \qquad a_0=1, \quad a_j>0, \quad j\in\N.
\end{equation}
Then $f\notin\Phi_{m+1}$ provided that
\begin{equation}\label{condtaylor}
\frac{2m+6}{m+1}\,a_2<a_1^2.
\end{equation}
In particular, $\Omega_n\in\Phi_n\backslash \Phi_{n+1}$.
\end{proposition}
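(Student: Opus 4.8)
The plan is to realize the block matrix $\kS$ of Lemma~\ref{signature} (with $k=1$) as a genuine Schoenberg matrix $\kS_X(f)$ and then force the eigenvalue $\lambda$ from \eqref{smallest} to be negative. First I would fix the configuration $X\subset\R^{m+1}$ consisting of the $p=m+2$ vertices $v_1,\dots,v_{m+2}$ of a regular simplex of edge length $s$ together with its centroid $c$. Since $m+2$ equidistant points already affinely span $\R^{m+1}$, the centroid is the \emph{unique} point equidistant from all the $v_i$, and its distance to each vertex is the circumradius $R$ with $R^2=\frac{p-1}{2p}\,s^2$. Writing $a:=f(s)$ and $b:=f(R)$, a direct inspection shows that $\kS_X(f)$ has exactly the form \eqref{linalg}--\eqref{ABC}: the top-left $(m+2)\times(m+2)$ block equals $(1-a)I_{p}+aE_{p}$, the last column gives $B=bE_{p,1}$, and the corner entry is $f(0)=1$.

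Because $f\le1$ forces $a=f(s)\le1$, the first inequality in \eqref{smallest} is unavailable, so everything hinges on making $\lambda=1+(p-1)a-pb^2<0$. My second step is to expand $\lambda$ for small $s$ via \eqref{taylor}: with $a=1-a_1s^2+a_2s^4-\cdots$ and $b=1-a_1\frac{p-1}{2p}s^2+a_2\frac{(p-1)^2}{4p^2}s^4-\cdots$ I would substitute into $1+(p-1)a-pb^2$. The constant term cancels, and — this is the \emph{crux} — the coefficient of $s^2$ cancels identically as well, precisely because the circumradius relation $R^2=\frac{p-1}{2p}s^2$ ties the quadratic coefficient of $b$ to that of $a$. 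Consequently $\lambda$ starts only at order $s^4$.

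Computing the $s^4$ coefficient gives $\frac{p-1}{4p}\bigl[2a_2(p+1)-a_1^2(p-1)\bigr]$. Setting $p=m+2$ turns the bracket into $2a_2(m+3)-a_1^2(m+1)$, which is negative exactly under hypothesis \eqref{condtaylor}. Hence $\lambda<0$ for all sufficiently small $s>0$, so $\kS_X(f)$ has a negative eigenvalue and $f\notin\Phi_{m+1}$. The main obstacle is the bookkeeping of the $s^2$- and $s^4$-coefficients: the result rests entirely on the exact vanishing of the quadratic term (a geometric feature of the simplex, not an accident) and on pinning down the quartic constant $\frac{2(m+3)}{m+1}$.

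Finally, for the ``in particular'' claim I would read off the Taylor coefficients of $\Omega_n$ from \eqref{kernel}: with $q=\frac n2-1$ one obtains $a_1=\frac1{2n}$ and $a_2=\frac1{8n(n+2)}$. Substituting into \eqref{condtaylor} with $m=n$ reduces the required inequality to $\frac{n(n+3)}{(n+1)(n+2)}<1$, i.e. $n^2+3n<n^2+3n+2$, which holds for every $n\in\N$. Since $\Omega_n\in\Phi_n$ by Schoenberg's Theorem~\ref{schoenbergtheorem}, this yields $\Omega_n\in\Phi_n\backslash\Phi_{n+1}$.
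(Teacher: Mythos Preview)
Your proof is correct and follows essentially the same approach as the paper: the identical configuration of a regular simplex in $\R^{m+1}$ together with its centroid, the same application of Lemma~\ref{signature} with $p=m+2$ and $k=1$, and the same Taylor expansion showing that the $s^2$-term of $\lambda$ vanishes while the $s^4$-coefficient equals $\frac{m+1}{4(m+2)}\bigl((2m+6)a_2-(m+1)a_1^2\bigr)$. Your verification of \eqref{condtaylor} for $\Omega_n$ via $\frac{n(n+3)}{(n+1)(n+2)}<1$ is equivalent to the paper's computation.
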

\begin{proof}
Let  $X=\{x_j\}_{j=1}^{m+3}$ be a  configuration in $\R^{m+1}$ such
that the first $m+2$ points are the vertices of a regular simplex in
$R^{m+1}$ with the edge length $t$ and  $x_{m+3}$ be the center of this simplex.
Clearly,
\begin{equation}\label{simplex}
|x_i-x_j|=t, \quad i,j=1,2,\ldots,m+2, \quad
|x_{m+3}-x_i|=\rho_n t, \quad \rho_m := \sqrt{\frac{m+1}{2(m+2)}}\,,
\end{equation}
where $\rho_m$ is the radius of the circumscribed sphere. It is
easily seen  that $\kS_X(f)=\kS$, where $\kS$ is given by \eqref{linalg} with
$$
p=m+2, \quad k=1, \quad a=a(t)=f(t), \quad b=b(t)=f(\rho_m t),
$$
From \eqref{taylor} and \eqref{condtaylor} we see that for small enough $t$
$$
\lambda_m(t)=1+(m+1)f(t)-(m+2)f^2(\rho_m t)=\sum_{j=1}^\infty (-1)^j\lambda_{m,j}t^{2j}
$$
with
$$ \lambda_{m,1}=0, \quad \lambda_{m,2}=\frac{m+1}{4(m+2)}\bigl((2m+6)a_2-(m+1)a_1^2\bigr)<0. $$
So
$$ \lambda_m(t)=\lambda_{m,2}t^4+O(t^6)<0 $$
for small enough $t$, and the result follows from Lemma \ref{signature}.

The series expansion for $\Omega_n$ is
\begin{equation*}
\Omega_n(t) =1-\frac{t^2}{2n}+\frac{t^4}{8n(n+2)}+O(t^6), \qquad t\to 0,
\end{equation*}
so
\begin{equation*}
a_1^2-\frac{2n+6}{n+1}\,a_2=\frac1{4n}\Bigl(\frac1{n}-\frac{n+3}{(n+1)(n+2)}\Bigr)=\frac1{2n^2(n+1)(n+2)}>0.
\end{equation*}
The proof is complete.
\end{proof}

\begin{remark}\label{wideclass}
Proposition \ref{negativeeigen} applies to a wide class of functions, e.g., to the powers of the
Schoenberg kernels $\Omega_n^p$, $p\in\N$, but the result obtained this way is far from being optimal
(at least for $p=2$). For instance, for the squares and cubes we find
$$ \frac{a_1^2(\Omega_n^2)}{a_2(\Omega_n^2)}=\frac{2n+4}{n+1}\,, \qquad \frac{a_1^2(\Omega_n^3)}{a_2(\Omega_n^3)}=\frac{6n+12}{3n+4}\,, $$
and \eqref{condtaylor} holds with $m\ge 2n+2$ and $m\ge 3n+4$, respectively. Hence,
$\Omega_n^2\notin\Phi_{2n+3}$ and $\Omega_n^3\notin\Phi_{3n+5}$
As a matter of fact we show later in Proposition \ref{square} that $\Omega_n^2\notin\Phi_{2n}$.
\end{remark}

\begin{remark}
If $X=\{x_k\}_{k=1}^{n+2}$ is the set of vertices of a regular simplex (without its center), then the
corresponding  Schoenberg matrix  $\kS_X(\Omega_n)$  is positive definite. Nonetheless, we conjecture that
a negative eigenvalue can already be seen on a certain configuration $Y=\{y_k\}_{k=1}^{n+2}$ of
$n+2$ points in $\R^{n+1}$.
\end{remark}

\section{When a function from the class $\Phi_n$ belongs to  $\Phi_{n+k}?$}

As a matter of fact, the class $\Phi_n\backslash\Phi_{n+1}$ is rather rich. We give some sufficient
conditions in terms of Schoenberg's measure $\nu_{n}=\nu_n(f)$ for $f$ to belong to
$\Phi_n\backslash\Phi_{n+1}$.

\begin{theorem}\label{prop_measure_relation}
Let  $(const\not=)f\in\Phi_{m}$, and let $\nu_{m}$ be its Schoenberg's
measure. Then $f\in\Phi_{m+k}$, $k\in\N$, if and only if there is a finite positive Borel measure $\nu$ on
$\R_+$ of the total mass $1$ such that
  \begin{equation}\label{twoschoen1}
\nu_m(dx)=p_m(x)\,dx, \qquad p_m(x) = \frac{2x^{m-1}}{B\bigl(\frac{m}2,\frac{k}2\bigr)}\,\int_x^\infty
\Bigl(1-\frac{x^2}{u^2}\Bigl)^{\frac{k}2-1}\,\frac{\nu(du)}{u^m}.
  \end{equation}
In this case $\nu=\nu_{m+k}$. In particular, $\nu_m$ is absolutely continuous  and $\nu_m\{(0,\varepsilon)\}>0$
for any $\varepsilon>0$.
   \end{theorem}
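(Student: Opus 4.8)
The plan is to reduce the whole statement to a single integral identity between the two Schoenberg kernels and then to read off the equivalence from the uniqueness of the representing measure in Schoenberg's theorem (Theorem~\ref{schoenbergtheorem}). The central claim is the kernel transition formula
\[
\Omega_{m+k}(s)=\frac{2}{B\bigl(\frac{m}{2},\frac{k}{2}\bigr)}\int_0^1\Omega_m(sv)\,v^{m-1}\bigl(1-v^2\bigr)^{\frac{k}{2}-1}\,dv,\qquad s\ge0.
\]
To establish it I would start from the surface-measure representation \eqref{fouriersphere}. Embedding $x\in\R^m$ as $(x,0)\in\R^{m+k}$ and writing $u\in S^{m+k-1}$ as $u=(u',u'')$ with $u'\in\R^m$, $u''\in\R^k$, one has $(u,(x,0))=(u',x)$, so $\Omega_{m+k}(|x|)=\int_{S^{m+k-1}}e^{i(u',x)}\,\sigma_{m+k}(du)$. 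Under the normalized uniform measure on $S^{m+k-1}$ the radius $v:=|u'|$ has density proportional to $v^{m-1}(1-v^2)^{k/2-1}$ on $[0,1]$, and conditionally on $|u'|=v$ the direction $u'/v$ is uniform on $S^{m-1}$. Integrating out the direction via \eqref{fouriersphere} applied to $\Omega_m$, and normalizing the radial density (the constant is $\frac12 B(\frac m2,\frac k2)$ via the substitution $w=v^2$), yields the displayed identity. Equivalently one could quote Sonine's first Bessel integral, but the geometric route uses only what is already stated.

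Granting the kernel identity, the necessity direction follows by substitution. If $f\in\Phi_{m+k}$, then $f(r)=\int_0^\infty\Omega_{m+k}(rt)\,\nu_{m+k}(dt)$ by Theorem~\ref{schoenbergtheorem}; inserting the kernel formula, changing variables $x=tv$ (so $v=x/t$, $dv=dx/t$), and interchanging the two integrations gives $f(r)=\int_0^\infty\Omega_m(rx)\,p_m(x)\,dx$ with exactly the density $p_m$ of \eqref{twoschoen1} and $\nu=\nu_{m+k}$. The interchange is legitimate by Tonelli's theorem, since the integrand is nonnegative, $\nu_{m+k}$ is finite, and $|\Omega_m|\le1$; moreover a direct evaluation of $\int_0^\infty p_m(x)\,dx$ (again by $w=v^2$) returns total mass $1$, so $p_m\,dx$ is an admissible Schoenberg measure on $\R^m$. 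Uniqueness of the representing measure then forces $\nu_m(dx)=p_m(x)\,dx$. For the sufficiency one runs the computation backwards: starting from \eqref{twoschoen1} with a probability measure $\nu$, Fubini reassembles $f(r)=\int_0^\infty\Omega_{m+k}(rt)\,\nu(dt)$, whence $f\in\Phi_{m+k}$, and uniqueness identifies $\nu=\nu_{m+k}$.

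The ``in particular'' assertions are then immediate from the shape of \eqref{twoschoen1}: absolute continuity of $\nu_m$ holds by construction, and since $\nu$ is a nonzero probability measure its support is nonempty, so for all sufficiently small $x>0$ the inner integral $\int_x^\infty(1-x^2/u^2)^{k/2-1}u^{-m}\,\nu(du)$ is strictly positive; hence $p_m(x)>0$ there and $\nu_m\{(0,\varepsilon)\}=\int_0^\varepsilon p_m(x)\,dx>0$ for every $\varepsilon>0$.

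The two places where I expect to need the most care are the clean derivation and normalization of the marginal radial density underlying the kernel identity, which is the geometric heart of the argument, and the appeal to uniqueness of the Schoenberg representing measure. The latter is precisely where the hypothesis $f\not\equiv\mathrm{const}$ enters: for a constant $f$ the representing measure degenerates to $\delta_0$, which is singular and is not captured by \eqref{twoschoen1}, so that case must be excluded for the conclusion to read correctly.
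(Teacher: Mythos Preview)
Your proposal is correct and follows essentially the same route as the paper: both hinge on the kernel transition identity (your geometric derivation is precisely Sonine's integral, which the paper simply quotes), then substitute it into the Schoenberg representation, interchange integrals, and invoke uniqueness of the representing measure for both directions. The only cosmetic difference is that you re-derive Sonine's formula from \eqref{fouriersphere} rather than citing it; your justification of the Fubini step and your handling of the ``in particular'' clause (noting that $f\not\equiv\mathrm{const}$ forces $\nu\neq\delta_0$) are also slightly more explicit than the paper's.
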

  \begin{proof}
Assume that $f\in\Phi_{m+k}$. Then $f$ admits two representations
\begin{equation}\label{twoschoen}
f(r)=\int_0^\infty \Omega_m(ru)\nu_m(du)=\int_0^\infty \Omega_{m+k}(ru)\nu_{m+k}(du).
  \end{equation}
It is not hard to obtain a relation between the measures $\nu_m$ and $\nu_{m+k}$. Indeed, recall Sonine's
integral \cite[formula (4.11.11), p. 218]{AAR}
 $$
J_\mu(t)=\frac2{\Gamma(\mu-\lambda)}\,\Bigl(\frac{t}2\Bigr)^{\mu-\lambda}\,
\int_0^1 J_\lambda(ts)s^{\lambda+1}(1-s^2)^{\mu-\lambda-1}\,ds,
\qquad \mu>\lambda\ge-\frac12.
  $$
For the values
  $$
\lambda=\frac{m}2-1, \qquad \mu=\frac{m+k}2-1=\lambda+\frac{k}2
 $$
one has in terms of $\Omega$'s
\begin{equation}\label{sonine}
 \Omega_{m+k}(t)=\frac2{B\bigl(\frac{m}2,\frac{k}2\bigr)}\,\int_0^1 \Omega_m(ts)s^{m-1}(1-s^2)^{\frac{k}2-1}\,ds.
 \end{equation}
We plug the latter equality into \eqref{twoschoen} to obtain
  \begin{equation*}
\begin{split}
f(r) &=\frac2{B\bigl(\frac{m}2,\frac{k}2\bigr)}\,\int_0^\infty\nu_{m+k}(du)\int_0^1
\Omega_m(rus)s^{m-1}(1-s^2)^{\frac{k}2-1}\,ds \\
&=\frac2{B\bigl(\frac{m}2,\frac{k}2\bigr)}\,\int_0^\infty \frac{\nu_{m+k}(du)}{u^{m}}
\int_0^u\Omega_m(rx)x^{m-1}\Bigl(1-\frac{x^2}{u^2}\Bigl)^{\frac{k}2-1}\,dx \\
&= \frac{2x^{m-1}}{B\bigl(\frac{m}2,\frac{k}2\bigr)}\,\int_0^\infty
\Omega_m(rx)dv\int_x^\infty \Bigl(1-\frac{x^2}{u^2}\Bigl)^{\frac{k}2-1}\,
\frac{\nu_{m+k}(du)}{u^{m}}\,.
\end{split}
   \end{equation*}
Due to the uniqueness of Schoenberg's representation we arrive at \eqref{twoschoen1}.

Conversely, starting from \eqref{twoschoen1} and reversing the argument we come to \eqref{twoschoen}
with $\nu_{m+k}=\nu$.

Since $f \not = const$,  $\nu_{m+k}\not=\delta_0$, we see that $\nu_m$ is absolutely
continuous, and $\nu_m\{(0,\varepsilon)\}>0$ for any $\varepsilon>0$, as claimed.
\end{proof}

\begin{remark}
A closely related result is obtained in \cite[Theorem 6.3.5]{TriBel} where a certain condition for
$f\in\Phi_n$ to belong to $\Phi_m$ with $m>n$ is given in terms of $f$ itself.
\end{remark}

We call \eqref{twoschoen1} the {\it $k$-step transition formula}.

We can paraphrase the statement of Theorem \ref{prop_measure_relation} as follows:
$f\in\Phi_m\backslash\Phi_{m+1}$ as long as $\nu_m$ is either not pure absolutely continuous or $\nu_m\{(0,\varepsilon)\}=0$ for some $\varepsilon>0$.

\begin{corollary}
Let $f\in\Phi_m$, and let its Schoenberg's measure $\nu_m$ be a pure point one. Then
$f\in\Phi_m\setminus\Phi_{m+1}$. In particular,
    \begin{equation}
f(t)=\sum^{\infty}_{k=1}\alpha_k\Omega_n(r_k t)\in\Phi_n\backslash\Phi_{n+1}.
    \end{equation}
for any sequence of nonnegative numbers $\{r_k\}_{k\ge1}$ and $\{\alpha_k\}_{k\ge1}\in l^1(\Bbb N)$.
  \end{corollary}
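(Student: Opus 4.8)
The plan is to derive both assertions directly from Theorem \ref{prop_measure_relation}, which is the only tool needed. The decisive observation is that this theorem, applied with $k=1$, forces the Schoenberg measure $\nu_m$ of any non-constant $f\in\Phi_{m+1}$ to be absolutely continuous; everything else is bookkeeping.

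First I would treat the general statement. Assume $f\in\Phi_m$ is non-constant, so that $\nu_m\neq\delta_0$, and suppose $\nu_m$ is a pure point measure. Then $\nu_m$ carries an atom at some point of $\R_+$, and consequently $\nu_m$ is not absolutely continuous. Were $f\in\Phi_{m+1}$, Theorem \ref{prop_measure_relation} (with $k=1$) would require $\nu_m(dx)=p_m(x)\,dx$ for the density $p_m$ displayed in \eqref{twoschoen1}, i.e., $\nu_m$ absolutely continuous, a contradiction. Hence $f\notin\Phi_{m+1}$, and since $f\in\Phi_m$ by hypothesis, $f\in\Phi_m\setminus\Phi_{m+1}$.

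For the concrete family $f(t)=\sum_{k\ge1}\alpha_k\Omega_n(r_k t)$ I would first confirm membership in $\Phi_n$. Each summand $\Omega_n(r_k\cdot)$ belongs to $\Phi_n$ (for $r_k=0$ it is the constant $1$), and since $|\Omega_n|\le\Omega_n(0)=1$ by \eqref{fouriersphere} and $\{\alpha_k\}\in l^1$ with $\alpha_k\ge0$, the series converges uniformly on $\R_+$. A uniformly convergent nonnegative combination of RPD functions is again continuous and positive definite, so $f\in\Phi_n$. By uniqueness of the Schoenberg representation, the normalized atomic measure $\nu_n=\bigl(\sum_k\alpha_k\bigr)^{-1}\sum_k\alpha_k\,\delta_{r_k}$ is precisely the Schoenberg measure of $f$, and it is pure point. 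Provided some $r_k$ is positive with $\alpha_k>0$, so that $f$ is non-constant, the first part of the corollary gives $f\in\Phi_n\setminus\Phi_{n+1}$.

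There is no genuine obstacle here, as this is a direct corollary of Theorem \ref{prop_measure_relation}. The only points deserving care are the standing requirement that $f$ be non-constant (equivalently $\nu_m\neq\delta_0$), which is needed both because the transition formula \eqref{twoschoen1} was established under $f\neq const$ and because constant functions lie in every $\Phi_{n+1}$; and the appeal to uniqueness of the Schoenberg measure in order to identify the displayed atomic measure with $\nu_n$.
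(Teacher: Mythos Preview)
Your proposal is correct and follows exactly the approach the paper intends: the corollary is stated without proof because it is an immediate consequence of Theorem \ref{prop_measure_relation} (as the paper remarks just before the corollary, $f\in\Phi_m\backslash\Phi_{m+1}$ whenever $\nu_m$ is not purely absolutely continuous). Your added care about the non-constancy hypothesis and the identification of the Schoenberg measure via uniqueness is appropriate and fills in the routine details the paper omits.
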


\begin{example}
Let
$$ f_1(r)=e^{-r}, \qquad f_2(r)=e^{-r^2}. $$
Both functions belong to $\Phi_n$ for all $n\in\N$, and their Schoenberg's measures are known
explicitly, see, e.g., \cite[Chapter 1]{Stein-Weiss},
\begin{equation}
\nu_m(f_1)=\frac2{B\bigl(\frac{m}2, \frac12\bigr)}\,\frac{u^{m-1}}{(1+u^2)^{\frac{m+1}2}}\,du, \qquad
\nu_m(f_2)=\frac1{\Gamma(\frac{m}2)} \left(\frac{u}2\right)^{m-1}\,\exp\left\{-\frac{u^2}4\right\}\,du.
\end{equation}
It is a matter of simple (though lengthy) computations to verify formula \eqref{twoschoen1} for
each of these sequences of measures for all $m,k\in\N$.
\end{example}

Let us single out the simplest case $k=2$.
\begin{corollary}
$f\in\Phi_m$ belongs to $\Phi_{m+2}$ if and only if there is a finite positive Borel measure $\nu$ on
$\R_+$ of the total mass $1$ such that
  \begin{equation}\label{twoschoen2}
\nu_m(dx)=p_m(x)\,dx, \qquad p_m(x) = mx^{m-1}\,\int_x^\infty \frac{\nu(du)}{u^m},
\quad \nu=\nu_{m+2}.
\end{equation}
\end{corollary}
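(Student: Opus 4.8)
The plan is to obtain this corollary as a direct specialization of Theorem \ref{prop_measure_relation} to the case $k=2$, so no new ideas are required; the work reduces to simplifying two explicit quantities in the general transition formula \eqref{twoschoen1}.

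First I would set $k=2$ in \eqref{twoschoen1}. The exponent of the factor $\bigl(1-\frac{x^2}{u^2}\bigr)^{\frac{k}{2}-1}$ then becomes $\frac{k}{2}-1=0$, so this factor collapses to $1$ and disappears from the integrand. This removes the only part of the kernel that depended nontrivially on the interplay between $x$ and $u$, leaving the clean expression $\int_x^\infty u^{-m}\,\nu(du)$.

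Next I would evaluate the constant. With $k=2$ the Euler beta function in the denominator becomes $B\bigl(\frac{m}{2},1\bigr)$, and using $B(a,b)=\Gamma(a)\Gamma(b)/\Gamma(a+b)$ together with $\Gamma\bigl(\frac{m}{2}+1\bigr)=\frac{m}{2}\,\Gamma\bigl(\frac{m}{2}\bigr)$ one gets $B\bigl(\frac{m}{2},1\bigr)=\frac{2}{m}$. Hence the prefactor $\frac{2}{B(m/2,1)}$ equals $m$, and \eqref{twoschoen1} turns exactly into \eqref{twoschoen2}, with the identification $\nu=\nu_{m+2}$ carried over verbatim from the theorem.

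Since every step is an elementary algebraic simplification of an already proved identity, there is essentially no obstacle; the only point meriting care is the beta-function evaluation, which is immediate. Both implications (necessity and sufficiency) are inherited directly from Theorem \ref{prop_measure_relation}, so nothing beyond this substitution needs to be checked.
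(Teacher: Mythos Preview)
Your proposal is correct and matches the paper's approach exactly: the corollary is presented in the paper simply as ``the simplest case $k=2$'' of Theorem~\ref{prop_measure_relation}, with no separate proof given. Your computation of $B\bigl(\tfrac{m}{2},1\bigr)=\tfrac{2}{m}$ and the collapse of the $(1-x^2/u^2)^{k/2-1}$ factor to $1$ are precisely the two simplifications that turn \eqref{twoschoen1} into \eqref{twoschoen2}.
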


\medskip

The problem we address now concerns the smoothness properties and the rate of decay of measures
$\nu_m$ in \eqref{twoschoen1} for the case $k=2j$, $j\in\N$.

We start with the following auxiliary statement. Let  $C^{k}(\R_+)$ denote the Freshet
space of $k$-smooth continuous functions defined on an open set $\R_+ =(0,\infty)$, and
let $AC[0,a]$ be the space of absolutely continuous functions on $[0,a]$.  We also put
$$
AC_{\loc}(\Bbb R_+):= \{f\in AC[0,a]\ \forall a>0\}, \quad AC^k_{\loc}(\R_+) := \{f\in
C^{k-1}(\R_+):\ f^{(k-1)}\in AC_{\loc}(\R_+)\}.
$$
%
%
   \begin{lemma}\label{Differentian_lemma}
Let  $\sigma$ be a function of bounded variation on $\R_+$, 
$\varphi(x,\cdot)\in AC_{\loc}(\Bbb R_+)$ for each $x\in\Bbb R_+,$ 
$D_t\varphi(\cdot,t)\in C^k(0,\infty)$ for each $t\in\Bbb R_+,$ and let
    \begin{equation}\label{3.7}
g(x) := \int^{\infty}_x \varphi (x,t)d\sigma(t).
    \end{equation}
Assume also that there exist functions $\zeta_0\in L^1([\varepsilon, \infty);d\sigma)$ and
$\psi_j\in L^1[\varepsilon, \infty)$ for each $\varepsilon>0$, and such that
  \begin{equation}\label{4.18AA}
|\varphi(x,t)| \le \zeta_0(t),\quad |D^j_x D_t\varphi(x,t)| \le \psi_j(t),\quad x\in
\R_+,  \quad  j=0,1,\ldots,k-1.
    \end{equation}
Moreover, assume that
  \begin{equation}\label{4.18}
\varphi(x,x)=0\quad \text{and}\quad  D^j_x D_t\varphi(x,t)|_{t=x}=0, \quad
j0,1,\ldots,k-2,
    \end{equation}
and  $\lim_{t\to\infty} \varphi(x,t)\sigma(t)=0$ for each $x\in \R_+$. Then $g\in C^k(\R_+)$. 
If in addition
  \begin{equation}\label{4.18A}
\lim_{t\to\infty}D^k_x \varphi(x,t)=0 \quad \text{for each} \ x\in \R_+,
  \end{equation}
then
   \begin{equation}\label{4.19}
g^{(k)}(x) = \sigma(x)\left([D^{k-1}_x (D_x + D_t)\varphi(x,t)]|_{t=x}\right) +
\int^{\infty}_x D_x^k\varphi (x,t)d\sigma(t).
   \end{equation}
   \end{lemma}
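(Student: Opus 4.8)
The plan is to turn the Stieltjes integral \eqref{3.7} into an ordinary Lebesgue integral, differentiate under the integral sign $k-1$ times, and then handle the final differentiation by a second integration by parts. First I would integrate by parts in $t$. Since $\varphi(x,\cdot)\in AC_{\loc}(\R_+)$ we have $d\varphi(x,t)=D_t\varphi(x,t)\,dt$, and since $\varphi(x,\cdot)$ is continuous it shares no discontinuities with the BV function $\sigma$, so Stieltjes integration by parts on $[x,T]$ is legitimate. The lower boundary term is $\varphi(x,x)\sigma(x)=0$ by the first relation in \eqref{4.18}, and the $T\to\infty$ term vanishes by the hypothesis $\lim_{t\to\infty}\varphi(x,t)\sigma(t)=0$. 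This gives the key reduction
\[
g(x)=-\int_x^\infty \sigma(t)\,D_t\varphi(x,t)\,dt,
\]
in which $\sigma$ now enters as a \emph{bounded measurable factor} rather than as a measure. The standing hypothesis $D_t\varphi(\cdot,t)\in C^k(0,\infty)$ guarantees that all the mixed derivatives $D_x^jD_t\varphi$ used below exist and are continuous.

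Next I would differentiate this Lebesgue integral by the Leibniz rule. One differentiation produces $\sigma(x)\,[D_t\varphi(x,t)]|_{t=x}$ together with $-\int_x^\infty\sigma(t)D_xD_t\varphi(x,t)\,dt$; the boundary term is annihilated by \eqref{4.18} (case $j=0$). Iterating, at the $\ell$-th step with $1\le\ell\le k-1$ the emerging boundary term $\sigma(x)\,[D_x^{\ell-1}D_t\varphi(x,t)]|_{t=x}$ vanishes by \eqref{4.18} so long as $\ell-1\le k-2$. Each differentiation under the integral, and the continuity of every derivative (by dominated convergence), is justified by the $L^1$-bounds $|D_x^jD_t\varphi|\le\psi_j$ of \eqref{4.18AA}, which are available precisely for $j\le k-1$. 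This yields $g\in C^{k-1}(\R_+)$ with
\[
g^{(k-1)}(x)=-\int_x^\infty \sigma(t)\,D_x^{k-1}D_t\varphi(x,t)\,dt.
\]

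The last differentiation is the crux and the step I expect to be the main obstacle. A naive Leibniz differentiation of $g^{(k-1)}$ would require an $L^1$-bound on $D_x^kD_t\varphi$, which is deliberately \emph{absent} from \eqref{4.18AA}. I would circumvent this by writing $D_x^{k-1}D_t\varphi=D_t\bigl(D_x^{k-1}\varphi\bigr)$ and integrating by parts \emph{back} against $d\sigma$, trading the $t$-derivative for the continuous factor $D_x^{k}\varphi$. Concretely, I would form the difference quotient of $g^{(k-1)}$ and integrate by parts \emph{inside} it (rather than differentiate under the integral): the bulk term over $[x+h,\infty)$ tends to $\int_x^\infty D_x^k\varphi(x,t)\,d\sigma(t)$, the contribution from $t\to\infty$ is controlled exactly by the extra hypothesis \eqref{4.18A}, which forces $D_x^k\varphi(x,t)\to0$ and hence both the boundedness of $D_x^k\varphi$ on $[x,\infty)$ and the convergence of that Stieltjes integral, while the two lower-endpoint contributions — the Leibniz term $\sigma(x)[D_x^{k-1}D_t\varphi]|_{t=x}$ and the integration-by-parts term $\sigma(x)[D_x^k\varphi]|_{t=x}$ — combine through the identity $D_x^{k-1}D_t+D_x^k=D_x^{k-1}(D_x+D_t)$ into precisely the coefficient appearing in \eqref{4.19}. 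This simultaneously delivers \eqref{4.19} and the continuity of $g^{(k)}$.

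Two technical points deserve care. First, evaluating the endpoint at infinity requires interchanging $\lim_{t\to\infty}$ with $D_x$; I would justify this from the uniform $L^1$-control \eqref{4.18AA}, so that $\lim_{t\to\infty}D_x^k\varphi=\bigl(\lim_{t\to\infty}D_x^{k-1}\varphi\bigr)'$ and \eqref{4.18A} may be applied. Second, the lower-endpoint analysis evaluates $\sigma$ at $t=x$: the formula \eqref{4.19} and the continuity of $g^{(k)}$ are transparent when $\sigma$ is continuous (as in the absolutely continuous setting of the intended applications), and at a jump of $\sigma$ one simply reads $\sigma(x)$ accordingly. Modulo this bookkeeping, it is the mixed-derivative identity above that makes the boundary terms collapse to the compact form \eqref{4.19}.
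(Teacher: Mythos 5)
Your proof follows the paper's own argument essentially step for step: the initial Stieltjes integration by parts reducing $g$ to $-\int_x^\infty\sigma(t)D_t\varphi(x,t)\,dt$, the $k-1$ differentiations under the integral sign justified by \eqref{4.18AA} with the emerging boundary terms annihilated by \eqref{4.18}, and a final integration by parts back against $d\sigma$ using \eqref{4.18A}, the two lower-endpoint contributions combining into $\sigma(x)\,[D_x^{k-1}(D_x+D_t)\varphi]|_{t=x}$ exactly as in \eqref{4.19}. The only deviation is in the bookkeeping of the last step: the paper first performs a $k$-th differentiation under the integral to reach its intermediate formula \eqref{3.14A}, which involves $\int_x^\infty D_x^kD_t\varphi\,\sigma\,dt$, and only then integrates by parts, whereas you fuse these two operations into a single difference-quotient computation precisely to avoid invoking an integrable bound on $D_x^kD_t\varphi$ that \eqref{4.18AA} does not provide -- a legitimate refinement that does not change the substance of the argument.
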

\begin{proof}
Since $\varphi(x,\cdot)\in A C_{\loc}(\Bbb R_+)$ and $\varphi(x,\cdot) \in L^1([x,
\infty);d\sigma)$  for each $x\in\Bbb R_+,$ one gets after integrating by parts with
account of   the first relation in \eqref{4.18}
   \begin{equation}\label{3.12}
g(x) = - \varphi(x,x)\sigma(x) - \int^{\infty}_x \varphi'_{t}(x,t)\sigma(t){dt} = -
\int^{\infty}_x \varphi'_{t}(x,t)\sigma(t){dt}.
  \end{equation}
On the other hand, since $\sigma$ is bounded,  $\psi_j\sigma \in L^1[\varepsilon, \infty)$ 
for each $\varepsilon>0$ and $j=0,1,\ldots,k-1$, and
according to \eqref{4.18AA}  
$$ (D^j_x D_t\varphi(x,t))\sigma(t) \le \psi_j(t)\sigma(t), \quad x\in\R_+. $$
Therefore one can differentiate \eqref{3.12} subsequently with account
of \eqref{4.18} to obtain by induction  that $g\in C^j(0,\infty)$ and
   \begin{equation}
g^{(j)}(x) = 
- \int^{\infty}_x D^j_x D_t\varphi(x,t)\sigma(t){dt}, \quad j\in\{0,1,\ldots,k-1\},
  \end{equation}
and
   \begin{equation}\label{3.14A}
g^{(k)}(x) = \sigma(x)\left(D^{k-1}_x D_t\varphi(x,t)|_{t=x}\right)  - \int^{\infty}_x
D^k_x D_t\varphi(x,t)\sigma(t){dt}.
  \end{equation}
Integrating   identity  \eqref{3.14A} by parts  and taking \eqref{4.18A} into account we
arrive at \eqref{4.19}.
    \end{proof}
%
   \begin{remark}
Note that we do not assume the majorant $\zeta_0\in L^1[\varepsilon, \infty)$ for each $\varepsilon>0$. 
The existence of the Lebesgue
integral in \eqref{3.12} is implied by estimate \eqref{4.18AA} with $j=0$.
\end{remark}
%
   \begin{corollary}\label{homogeneous_different_cor}
Assume that $\varphi_0\in AC^k[0,1]$ and
   \begin{equation}\label{3.15}
\varphi(x,t) := \frac{1}{t}\varphi_0\left(\frac{x}{t}\right),\  t\ge
x,\quad\text{and}\quad \varphi_0(1)=\varphi'_0(1)=\ldots =\varphi_0^{(k-1)}(1)=0.
   \end{equation}
Let also $\sigma(\cdot)$ be of bounded variation and let $g(\cdot)$ be given by
\eqref{3.7}. Then
   \begin{equation}\label{4.19AA}
g^{(k)}(x) =  \int^{\infty}_x D_x^k\varphi (x,t)d\sigma(t).
   \end{equation}
  \end{corollary}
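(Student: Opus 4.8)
The plan is to read Corollary \ref{homogeneous_different_cor} as a direct specialization of Lemma \ref{Differentian_lemma} to the homogeneous kernel $\varphi(x,t)=t^{-1}\varphi_0(x/t)$, $t\ge x$, the one genuinely new point being that the vanishing conditions $\varphi_0^{(j)}(1)=0$, $0\le j\le k-1$, force the boundary term in \eqref{4.19} to disappear, leaving \eqref{4.19AA}. First I would record the differentiation formula
$$D_x^j\varphi(x,t)=\frac{1}{t^{j+1}}\,\varphi_0^{(j)}\Bigl(\frac{x}{t}\Bigr),\qquad 0\le j\le k,$$
obtained by induction from the chain rule, together with its $t$-derivative
$$D_x^j D_t\varphi(x,t)=-\frac{j+1}{t^{j+2}}\,\varphi_0^{(j)}\Bigl(\frac{x}{t}\Bigr)-\frac{x}{t^{j+3}}\,\varphi_0^{(j+1)}\Bigl(\frac{x}{t}\Bigr).$$
Since the integration in \eqref{3.7} runs over $t\ge x$, the argument $x/t$ stays in $(0,1]$, so $\varphi_0,\dots,\varphi_0^{(k-1)}$ are bounded (being continuous on $[0,1]$) while $\varphi_0^{(k)}\in L^1[0,1]$.

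Next I would translate the hypotheses of the lemma. Evaluating at $t=x$ (so $x/t=1$) gives $\varphi(x,x)=x^{-1}\varphi_0(1)=0$ and
$$D_x^j D_t\varphi(x,t)\big|_{t=x}=-\frac{1}{x^{j+2}}\bigl((j+1)\varphi_0^{(j)}(1)+\varphi_0^{(j+1)}(1)\bigr),$$
which vanishes for every $j=0,1,\dots,k-2$ exactly because $\varphi_0^{(i)}(1)=0$ for $i=0,1,\dots,k-1$; this is condition \eqref{4.18}. The integrability bounds \eqref{4.18AA} come from the two displayed formulas: on $\{t\ge\varepsilon\}$ one dominates $|\varphi|$ and $|D_x^jD_t\varphi|$, $j\le k-1$, by constants times negative powers of $t$ (using boundedness of $\varphi_0,\dots,\varphi_0^{(k-1)}$, the inequality $x\le t$, and the $L^1$-control of $\varphi_0^{(k)}$ after the substitution $s=x/t$), which are integrable against $d\sigma$ and against $dt$. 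The same estimates give $\varphi(x,t)\sigma(t)\to0$ and, from the first formula, $D_x^k\varphi(x,t)\to0$ as $t\to\infty$, that is \eqref{4.18A}.

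The decisive step is the vanishing of the boundary term in \eqref{4.19}. I would observe that it is a total derivative along the diagonal: setting $P(x):=D_x^{k-1}\varphi(x,t)\big|_{t=x}$, the chain rule yields
$$P'(x)=\bigl[D_x^{k}\varphi+D_x^{k-1}D_t\varphi\bigr]_{t=x}=\bigl[D_x^{k-1}(D_x+D_t)\varphi\bigr]_{t=x}.$$
But the differentiation formula gives $P(x)=x^{-k}\varphi_0^{(k-1)}(1)=0$, so $P'(x)\equiv0$ and the boundary term $\sigma(x)\,[D_x^{k-1}(D_x+D_t)\varphi]_{t=x}$ is identically zero. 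Lemma \ref{Differentian_lemma} then collapses \eqref{4.19} into \eqref{4.19AA}, as claimed.

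I expect the only real difficulty to be the regularity bookkeeping at top order. Read literally, the lemma requires $D_t\varphi(\cdot,t)\in C^k$, whereas the $k$-th $x$-derivative of $D_t\varphi$ for $\varphi=t^{-1}\varphi_0(x/t)$ formally produces $\varphi_0^{(k+1)}$, one order beyond what $\varphi_0\in AC^k[0,1]$ supplies, and the majorant for $D_x^{k-1}D_t\varphi$ is not uniform in $x$ when $\varphi_0^{(k)}$ is unbounded. The point is that this spurious top order cancels once one integrates by parts in $t$: only $D_x^k\varphi=t^{-(k+1)}\varphi_0^{(k)}(x/t)$, hence only $\varphi_0^{(k)}\in L^1$, survives in \eqref{4.19AA}. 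To make this rigorous I would either perform the $t$-integration by parts one derivative earlier, using $D_x^{k-1}\varphi(x,x)=x^{-k}\varphi_0^{(k-1)}(1)=0$ to kill the endpoint contribution before the last differentiation (so that $\varphi_0^{(k+1)}$ never enters), or approximate $\varphi_0$ in the $AC^k$-topology by smooth functions obeying the same vanishing conditions at $1$, apply the clean case, and pass to the limit with the $L^1[0,1]$-control of $\varphi_0^{(k)}$; the boundary term remains zero throughout, since it depends only on $\varphi_0^{(k-1)}(1)$.
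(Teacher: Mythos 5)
Your proposal follows the paper's proof essentially verbatim: both specialize Lemma \ref{Differentian_lemma} to the homogeneous kernel, check conditions \eqref{4.18}--\eqref{4.18AA} via the formula $D_x^j\varphi=t^{-(j+1)}\varphi_0^{(j)}(x/t)$, and observe that the boundary term in \eqref{4.19} reduces to a multiple of $\varphi_0^{(k-1)}(1)=0$ (the paper computes $[D_x^{k-1}(D_x+D_t)\varphi]_{t=x}$ explicitly and sees the $\varphi_0^{(k)}$ contribution killed by the factor $1-x/t$, while you obtain the same vanishing more slickly from $P(x):=D_x^{k-1}\varphi(x,t)|_{t=x}\equiv0$ implying $P'\equiv0$). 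Your closing remark on the top-order bookkeeping --- that $D_x^kD_t\varphi$ formally involves $\varphi_0^{(k+1)}$, one order beyond what $\varphi_0\in AC^k[0,1]$ supplies --- flags a point the paper passes over silently, and your proposed remedies (perform the $t$-integration by parts one derivative earlier, or approximate in the $AC^k$ topology) are both sound.
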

    \begin{proof}
Clearly, conditions \eqref{4.18} are implied by conditions \eqref{3.15}. Moreover,
estimates  \eqref{4.18AA}  hold with $\psi_j(t)= C_j t^{-(j+1)}$ and
$C_j=\|\varphi_0^{(j)}\|_{C[0,1]}$, $j\in\{1,\ldots,k\}$. Besides $|\varphi(x,t)|\le C_0
t^{-1}$, where $C_0=\|\varphi_0\|_{C[0,1]}$, hence $\varphi(x,t)\in L^1([x,\infty);
d\sigma)$ for each $x\in \R_+$.

Further,  it is easily seen that
   \begin{eqnarray}
(D_x + D_t)D^{k-1}_x\varphi(x,t) = (D_x+D_t)\left(\frac{1}{t^k}
\varphi_0^{(k-1)}\left(\frac{x}{t}\right)\right) \nonumber \\
= (-1)^{k-1} \left[\frac{1}{t^{k+1}}\varphi_0^{(k)}\left(\frac{x}{t}\right) \left[1 -
\frac{x}{t}\right] - \frac{k}{t^{k+1}}\varphi_0^{(k-1)}\left(\frac{x}{t}\right)
\right]\,.
   \end{eqnarray}
It follows with account of \eqref{3.15} that
    \begin{equation}\label{3.18}
[D^{k-1}_x (D_x + D_t)\varphi(x,t)]|_{t=x} =
(-1)^k\frac{k}{x^{k+1}}\varphi^{(k-1)}_0(1)=0.
   \end{equation}
Thus conditions of Lemma  \ref{Differentian_lemma} are met and \eqref{4.19AA}  follows
by combining  \eqref{4.19}  with  \eqref{3.18}.
        \end{proof}
%

Recall a classical result of Hardy, Littlewood and P\'olya \cite[Theorem 319]{hlp}.

{\bf Theorem HLP}. Let $T$ be a measurable function on $\R^2_+$ such that it is homogeneous
of the degree $-1$, that is, $T(\lambda x,\lambda t)=\lambda^{-1}T(x,t)$ for $\lambda>0$,
and for some $p$, $1\le p<\infty$
\begin{equation}\label{hlpnorm}
\tau_p:=\int_0^\infty |T(1,t)|t^{-1/p}\,dt<\infty.
\end{equation}
Then the integral operator generated by $T$
$$ (Th)(x):=\int_0^\infty T(x,t)h(t)\,dt $$
is bounded in $L^p(\R_+)$, and its norm $\|T\|\le\tau_p$.

A typical example which will be of particular interest for us is
\begin{equation}
T(x,t)=\left\{
         \begin{array}{ll}
           \frac1{t}\,T\bigl(\frac{x}{t}\bigr), & \hbox{$t\ge x$;} \\
           $0$, & \hbox{$t<x$,}
         \end{array}
       \right.
\end{equation}
where $T$ is a polynomial. Now
$$ \tau_p=\int_0^1|T(u)|u^{1/p-1}\,du<\infty, \quad p\ge1. $$
   \begin{theorem}\label{prop_smoothness}
Let  $(const\not=)f\in\Phi_{m}$ with the  Schoenberg's measure $\nu_m$, $m\in\N$. Assume that $f\in\Phi_{m+2j}$, $j\in\N,$ with the  Schoenberg's measure $\nu=\nu_{m+2j}$. Then the following relations hold
  \begin{enumerate}
  \item [\em (i)]
  $\nu_m\in AC^{j}_{\loc}(\R_+)$.

  \item [\em (ii)]  If $\nu$ is absolutely continuous, $\nu=p_{m+2j}\,dx$, and
$p_{m+2j}\in L^p(\Bbb R_+)$ for some $1\le p<\infty$, then
    \begin{equation}\label{4.23A}
x^k p^{(k)}_m(x)\in L^p(\Bbb R_+), \qquad   k=0,1,\ldots, j-1.
    \end{equation}
Moreover,
   \begin{equation}\label{4.24B}
p^{(k)}_m(x)  = o\left(\frac{1}{x^{k+1}}\right), \quad x\to\infty, \qquad  
k=0,1,\ldots, j-1.
   \end{equation}
  \item [\em (iii)]
Let $\nu$ satisfy
  \begin{equation}\label{moment}
I:=\int_0^\infty \frac{\nu(du)}{u}<\infty.
  \end{equation}
Then
  \begin{equation}\label{atzero}
p_m(0)=\lim_{x\to 0+}p_m(x)=0, \quad m\ge2, \qquad p_1(0)=I.
  \end{equation}
Furthermore,
    \begin{equation}\label{4.23AB}
x^k p^{(k)}_m(x)\in L^\infty(\Bbb R_+), \qquad   k=0,1,\ldots j-1.
\end{equation}
  \end{enumerate}
     \end{theorem}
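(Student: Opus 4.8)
The plan is to recast the density $p_m$ from the transition formula \eqref{twoschoen1} (with $k=2j$) as an integral against $\nu=\nu_{m+2j}$ with a homogeneous kernel, and then feed it into the differentiation machinery of Lemma \ref{Differentian_lemma} and Corollary \ref{homogeneous_different_cor}. Writing $\varphi(x,u)=\frac1u\varphi_0(x/u)$ with $\varphi_0(s)=\frac{2}{B(m/2,j)}\,s^{m-1}(1-s^2)^{j-1}$, one checks at once that $p_m(x)=\int_x^\infty\varphi(x,u)\,\nu(du)$ and that $\varphi$ is homogeneous of degree $-1$. Since $m,j\in\N$, the profile $\varphi_0$ is a genuine polynomial, and the factor $(1-s^2)^{j-1}$ forces $\varphi_0(1)=\varphi_0'(1)=\dots=\varphi_0^{(j-2)}(1)=0$ while $\varphi_0^{(j-1)}(1)\ne0$. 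This vanishing pattern is exactly what the hypotheses \eqref{3.15} demand, and it is the structural fact driving all three parts.

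For (i) I would apply Corollary \ref{homogeneous_different_cor} with smoothness order $j-1$: the conditions \eqref{3.15} hold, so $p_m\in C^{j-1}(\R_+)$ together with the representations
$$ p_m^{(k)}(x)=\int_x^\infty D_x^k\varphi(x,u)\,\nu(du),\qquad D_x^k\varphi(x,u)=\frac{1}{u^{k+1}}\,\varphi_0^{(k)}\!\Big(\frac{x}{u}\Big),\quad 0\le k\le j-1, $$
which are the workhorses for (ii) and (iii) as well. To upgrade this to $\nu_m\in AC^{j}_{\loc}(\R_+)$ I would run Lemma \ref{Differentian_lemma} one step further, at order $j$; now the boundary term $[D^{j-1}_x(D_x+D_t)\varphi]|_{t=x}$ no longer vanishes---by \eqref{3.18} it equals $(-1)^j j\,x^{-(j+1)}\varphi_0^{(j-1)}(1)$---so \eqref{4.19} produces an a.e.\ formula for $p_m^{(j)}$ whose right-hand side (the distribution function of $\nu$ times a smooth factor, plus an integral) is locally integrable. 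This is precisely the assertion that $p_m^{(j-1)}$ is locally absolutely continuous.

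For (ii) the weighted $L^p$ bound \eqref{4.23A} follows by reading $x^k p_m^{(k)}$ as an integral operator applied to $p_{m+2j}$: using the representation above, $x^k p_m^{(k)}(x)=\int_0^\infty T_k(x,u)\,p_{m+2j}(u)\,du$ with $T_k(x,u)=\frac1u\Psi_k(x/u)$ for $u\ge x$ (and $0$ otherwise), where $\Psi_k(s)=s^k\varphi_0^{(k)}(s)$. This kernel is homogeneous of degree $-1$, so Theorem HLP applies once I verify $\tau_p=\int_0^1|\Psi_k(s)|\,s^{1/p-1}\,ds<\infty$, which is clear since $\Psi_k$ is a bounded polynomial and $s^{1/p-1}$ is integrable near $0$ for $p\ge1$. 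The decay \eqref{4.24B} is then obtained not from the $L^p$ hypothesis but from the fact that $\nu$ is finite, hence $p_{m+2j}\in L^1(\R_+)$: since $x^{k+1}p_m^{(k)}(x)=\int_x^\infty G_k(x/u)\,p_{m+2j}(u)\,du$ with $G_k(s)=s^{k+1}\varphi_0^{(k)}(s)$ bounded on $[0,1]$, one has $|x^{k+1}p_m^{(k)}(x)|\le C_k\int_x^\infty|p_{m+2j}(u)|\,du\to0$.

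Finally, for (iii) the extra assumption \eqref{moment} makes everything pointwise: from $x^k p_m^{(k)}(x)=\int_x^\infty\frac1u\Psi_k(x/u)\,\nu(du)$ and $|\Psi_k|\le M_k$ on $[0,1]$ I get $|x^k p_m^{(k)}(x)|\le M_k\int_x^\infty u^{-1}\,\nu(du)\le M_k I$, which is \eqref{4.23AB}. The boundary value \eqref{atzero} comes from dominated convergence in $p_m(x)=\int_x^\infty\frac1u\varphi_0(x/u)\,\nu(du)$, with majorant $\tfrac{2}{B(m/2,j)}u^{-1}\in L^1(\nu)$: the factor $(x/u)^{m-1}$ tends to $0$ when $m\ge2$, giving $p_m(0)=0$, and to $1$ when $m=1$, giving the stated value $p_1(0)=I$ for $m=1$. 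The main obstacle I anticipate is not any single estimate but the bookkeeping needed to differentiate under the integral sign legitimately: one must exhibit the integrable majorants $\psi_j$ and check the vanishing identities \eqref{4.18}--\eqref{4.18A} for this homogeneous kernel, and in (i) correctly track the surviving boundary term at order $j$ that delivers the $AC^{j}_{\loc}$ conclusion.
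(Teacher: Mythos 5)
Your overall route coincides with the paper's: identify the kernel $\varphi(x,u)=u^{-1}\varphi_0(x/u)$ with $\varphi_0(s)=\tfrac{2}{B(m/2,j)}s^{m-1}(1-s^2)^{j-1}$, apply Corollary \ref{homogeneous_different_cor} at order $j-1$ to get the representations $x^kp_m^{(k)}(x)=\int_x^\infty u^{-k-1}\,(x/u)^k\varphi_0^{(k)}(x/u)\,\nu(du)$ for $k\le j-1$, then Theorem HLP for \eqref{4.23A}, finiteness of $\nu$ (resp.\ $p_{m+2j}\in L^1$) for \eqref{4.24B}, and dominated convergence plus the uniform bound $|\Psi_k|\le M_k$ for part (iii). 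Parts (ii) and (iii) of your argument are essentially identical to the paper's proof and are fine.

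The problem is the ``upgrade'' step in part (i). The conclusion $\nu_m\in AC^{j}_{\loc}(\R_+)$ refers to the distribution function of the measure $\nu_m$; since that distribution function has derivative $p_m$, it is equivalent to $p_m\in AC^{j-1}_{\loc}(\R_+)$, which is exactly what your first step (Corollary \ref{homogeneous_different_cor} at order $j-1$) already delivers --- this is how the paper concludes, and no further differentiation is needed. Your proposed extra application of Lemma \ref{Differentian_lemma} at order $j$ is not only superfluous but invalid: the hypothesis \eqref{4.18} at order $k=j$ requires $D_x^{j-2}D_t\varphi(x,t)|_{t=x}=0$, and this quantity involves $\varphi_0^{(j-1)}(1)$, which you yourself note is nonzero. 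Moreover the conclusion you want ($p_m^{(j-1)}\in AC_{\loc}$) is false in general: take $\nu=\delta_{u_0}$ and $j=1$, so that $p_m(x)=c\,x^{m-1}u_0^{-m}\mathbf{1}_{[0,u_0)}(x)$ has a jump at $u_0$; for $j=2$ the function $p_m$ is Lipschitz but $p_m'$ jumps at $u_0$, so $p_m\notin C^1$. In each case the genuine order of smoothness is exactly $AC^{j-1}_{\loc}$, the obstruction being precisely the nonvanishing of $\varphi_0^{(j-1)}(1)$ combined with the jump of the distribution function of $\nu$. Relatedly, your inference ``the right-hand side of \eqref{4.19} is locally integrable, hence $p_m^{(j-1)}$ is locally absolutely continuous'' is not a valid implication: local integrability of a candidate derivative does not yield absolute continuity unless one also proves the Newton--Leibniz identity, which is exactly what fails here. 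Deleting the upgrade step (and reading $\nu_m\in AC^j_{\loc}$ as a statement about the distribution function) repairs part (i).
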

        \begin{proof}
(i) For $m,j\in\N$ we put
      \begin{equation}\label{4.24A}
P_{m,j}(u):=\frac{u^{m-1}(1-u^2)^{j-1}}{B\bigl(\frac{m}2,j \bigr)}, \quad
Q_{m,j}(x,t) := \frac1{t}\,P_{m,j}\left(\frac{x}{t}\right)=\frac{2x^{m-1}}{B\bigl(\frac{m}2,j \bigr){t^m}}
\left(1-\frac{x^2}{t^2}\right)^{j-1}\,,
      \end{equation}
and
    \begin{equation}\label{4.26}
Q_{m,j,k}(x,t) := x^k D^k_x Q_{m,j}(x,t) = x^k
t^{-k-1}P_{m,j}^{(k)}\left(\frac{x}{t}\right), \quad k \in \{1,2\ldots,j-1\}, \quad
Q_{m,j,0}=Q_{m,j}.
    \end{equation}
It is easily seen that the kernel  $\varphi=Q_{m,j}$ is
homogeneous of degree $-1$ and meets the hypothesis of Corollary \ref{homogeneous_different_cor}.
So, Corollary \ref{homogeneous_different_cor}  applied to relation \eqref{twoschoen1} provides
$p_m~\in AC^{j-1}_{\loc}(\R_+)$ and
\begin{eqnarray}\label{4.26A}
x^k p^{(k)}_m(x) = \int^{\infty}_x Q_{m,j,k}(x,t)\,\nu(dt), \qquad k=0,1, \ldots, j-1.
\end{eqnarray}

(ii). It is clear that the kernels $Q_{m,j,k}$, $k \in \{0,\ldots,j-1\}$, are
homogeneous of the degree $-1$, and \eqref{hlpnorm} holds for all $1\le p<\infty$. By
HLP theorem, the integral operators
  \begin{equation*}
(Q_{m,j,k}h)(x)=\int_x^\infty Q_{m,j,k}(x,t)h(t)\,dt, \qquad k=0,\ldots,j-1,
 \end{equation*}
are bounded in $L^p(\R_+)$ for all $1\le p<\infty$. The latter relation with $h=
p_{m+2j}$ leads directly to \eqref{4.23A} in view of \eqref{4.26A}. Note that
$p_{m+2j}\in L^1(\R_+)$ (so \eqref{4.23A} holds automatically for $p=1$) since
$\nu$ is a finite measure.

Next, it easily follows from \eqref{4.24A} and \eqref{4.26} that
   \begin{equation}\label{4.31}
\sum_{k=0}^{j-1}|Q_{r,m,k}(x,t)|\le \frac{C_{m,j}}{t}, \quad t\ge x
   \end{equation}
and since the measure $\nu$ is finite, we have from \eqref{4.26A}
   \begin{equation}\label{4.26AB}
|x^k p^{(k)}_m(x)| \le C_{m,j}\int^{\infty}_x \frac{\nu(dt)}{t} \le
\frac{C_{m,j}}{x}\int^{\infty}_x \nu(dt)= o\left(\frac{1}{x}\right), \quad x\to\infty
   \end{equation}
for $1\le k\le j-2$.  For $k = j-1$ the estimate is a consequence of
\eqref{4.31}, \eqref{4.24A} and \eqref{4.26A}, and so \eqref{4.24B} follows.

(iii).  Limit relations \eqref{atzero} follow easily  from \eqref{twoschoen1}, the bound
$$
0 < \left(1-\frac{x^2}{t^2}\right)^{\frac{k}2-1}\,\left(\frac{x}{t}\right)^{m-1}<1,
\qquad t\ge x,
$$
and the Dominated Convergence Theorem in view of assumption \eqref{moment}.

Relations \eqref{4.23AB} arise directly from \eqref{atzero} and \eqref{4.24B}.
The proof is complete.
         \end{proof}

We turn now to the case $k=\infty$ in Theorem \ref{prop_measure_relation}. Recall that
$$ \Phi_\infty := \bigcap^{\infty}_{m=1}\Phi_m. $$
A theorem of Schoenberg states that $f\in\Phi_\infty$ if and only if there is a positive measure
$\sigma$ on $\R_+$ with the total mass $1$ such that
\begin{equation}\label{schninf}
f(r)=\int_0^\infty e^{-sr^2}\,\sigma(ds).
\end{equation}

\begin{proposition}\label{schoeninfty}
Let  $(const\not=)f\in\Phi_{m}$, and let $\nu_{m}$ be its Schoenberg's measure. Then $f\in\Phi_\infty$
if and only if there is a finite positive Borel measure $\sigma$ on $\R_+$ of the total mass $1$ such that
  \begin{equation}\label{schinf1}
\nu_m(dx)=p_{m,\sigma}(x)\,dx, \qquad p_{m,\sigma}(x) := \frac{x^{m-1}}{2^{\frac{m}2-1}\Gamma(\frac{m}2)}
\int_0^\infty (2s)^{-m/2}\exp\biggl(-\frac{x^2}{4s}\biggr)\,\sigma(ds).
  \end{equation}
The density $p_{m,\sigma}$ can be extended as an analytic function to the sector $\{|\arg x|<\frac{\pi}4\}$.
   \end{proposition}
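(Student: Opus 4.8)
The plan is to bridge the two representations of $f$ — the $\Phi_\infty$ representation \eqref{schninf} and the $\Phi_m$ representation $f(r)=\int_0^\infty\Omega_m(rx)\,\nu_m(dx)$ — through the explicitly known Schoenberg representation of the Gaussian. Recall from the worked example earlier that $e^{-r^2}\in\Phi_m$ with Schoenberg density $\Gamma(\frac m2)^{-1}(u/2)^{m-1}e^{-u^2/4}$. Rescaling $r\mapsto\sqrt s\,r$ and substituting $x=\sqrt s\,u$ gives, for every $s>0$,
\[
e^{-sr^2}=\int_0^\infty\Omega_m(rx)\,G_{m,s}(x)\,dx,\qquad
G_{m,s}(x):=\frac{x^{m-1}}{2^{m/2-1}\Gamma(\frac m2)}\,(2s)^{-m/2}\,e^{-x^2/(4s)},
\]
and a direct computation (substituting $u=x^2/(4s)$) confirms that $G_{m,s}$ is a probability density in $x$ and that $p_{m,\sigma}(x)=\int_0^\infty G_{m,s}(x)\,\sigma(ds)$ is precisely the density in \eqref{schinf1}. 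This Gaussian identity is the technical heart of the argument; everything else is assembly.

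For the forward implication, suppose $f\in\Phi_\infty$, so \eqref{schninf} holds with $\sigma(\R_+)=1$. I would insert the Gaussian identity into \eqref{schninf} and interchange the order of integration; the interchange is legitimate because $|\Omega_m(rx)|\le1$ and $\int_0^\infty\!\int_0^\infty G_{m,s}(x)\,dx\,\sigma(ds)=1<\infty$. This yields $f(r)=\int_0^\infty\Omega_m(rx)\,p_{m,\sigma}(x)\,dx$, and the uniqueness of the Schoenberg representation identifies $\nu_m(dx)$ with $p_{m,\sigma}(x)\,dx$, which is exactly \eqref{schinf1} with $\sigma$ the measure from \eqref{schninf}. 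The converse simply reverses this chain: starting from \eqref{schinf1}, substitute $p_{m,\sigma}$ into $f(r)=\int_0^\infty\Omega_m(rx)\,\nu_m(dx)$, apply Fubini again under the same bound, and collapse the inner $x$-integral back to $e^{-sr^2}$ via the Gaussian identity, arriving at \eqref{schninf} and hence $f\in\Phi_\infty$.

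For the analytic continuation I would factor $p_{m,\sigma}(x)=\frac{x^{m-1}}{2^{m/2-1}\Gamma(\frac m2)}\,\Psi(x^2)$ with $\Psi(z):=\int_0^\infty(2s)^{-m/2}e^{-z/(4s)}\,\sigma(ds)$. For $\re z>0$ the integrand is dominated by $(2s)^{-m/2}e^{-\re z/(4s)}$, a bounded and hence $\sigma$-integrable function of $s$ on $(0,\infty)$, so by differentiation under the integral sign (or Morera's theorem) $\Psi$ is holomorphic on the right half-plane $\{\re z>0\}$. Since $x\mapsto x^2$ maps the sector $\{|\arg x|<\frac\pi4\}$ into $\{\re z>0\}$ and $x^{m-1}$ is a polynomial, $p_{m,\sigma}$ extends holomorphically to that sector.

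The only genuine labor lies in pinning down the Gaussian identity with exactly the constants appearing in \eqref{schinf1} and in justifying the two Fubini interchanges; I expect the latter to be the main point requiring care, though the uniform bound $|\Omega_m|\le1$ together with $\sigma(\R_+)=1$ renders it routine. It is worth noting that the aperture $\frac\pi4$ is not an artifact of the estimates but is forced: it is precisely the preimage under $z=x^2$ of the natural half-plane of convergence $\{\re z>0\}$ of the transform $\Psi$.
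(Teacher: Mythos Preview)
Your proposal is correct and follows essentially the same route as the paper: the paper's proof simply says to argue as in Theorem~\ref{prop_measure_relation}, replacing Sonine's integral by the Gaussian identity
\[
e^{-sr^2}=\frac{1}{2^{m/2-1}\Gamma(m/2)}\int_0^\infty\Omega_m(ru)\,\frac{u^{m-1}}{(2s)^{m/2}}\,e^{-u^2/(4s)}\,du,
\]
which is exactly the identity you isolate and then feed into Fubini and uniqueness. You go somewhat further than the paper by spelling out the Fubini justification and by actually proving the analytic continuation to the sector $\{|\arg x|<\frac\pi4\}$ via the factorization $p_{m,\sigma}(x)=c_m\,x^{m-1}\Psi(x^2)$, a point the paper merely asserts.
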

\begin{proof}
We argue in the same manner as in Theorem \ref{prop_measure_relation}. The only difference is that  we use \eqref{schninf}
instead of Schoenberg's representation in $\Phi_{m+k}$, and the equality
\begin{equation*}
e^{-sr^2}=\frac{1}{2^q\Gamma(q+1)}\,\int_0^\infty\Omega_n(ru)\frac{u^{n-1}}{(2s)^{n/2}}\,
\exp\biggl(-\frac{u^2}{4s}\biggr)\,du
\end{equation*}
in place of Sonine's integral.
\end{proof}

\begin{remark}
Let $f\in\Phi_m$ with Schoenberg's measure $\nu_m$ \eqref{schinf1}. It is clear that \eqref{schninf} holds
then for $f$. The following problem arises naturally. Given $f\in\Phi_\infty$, whether it is possible
to obtain \eqref{schinf1} as a limit case of \eqref{twoschoen1} as $k\to\infty$, proving thereby the
result of Schoenberg \eqref{schninf}?
\end{remark}

  \begin{proposition}
Let $f\in\Phi_m$ with $m\ge 3$, i.e., $f$ admits representation   \eqref{schoenberg1}
with the measure  $\sigma_m$.
Then it admits the representation
    \begin{equation}\label{4.22A}
f(r)=\int^{\infty}_0\Omega_{m-2}(rx)\,\sigma_{m-2}(dx)
    \end{equation}
with the measure $\sigma_{m-2}$ given by
    \begin{equation}\label{4.23}
\sigma_{m-2}(dx) = \frac{1}{2}\int^x_0 u^{m-3}du\int^{\infty}_u\frac{\sigma_m(dt)}{t^{m-2}}\, dx.
      \end{equation}
Conversely, if $f$ admits representation \eqref{4.22A} with  $\sigma_{m-2}$ of the form \eqref{4.23},
then $f\in\Phi_m$.
  \end{proposition}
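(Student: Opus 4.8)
The plan is to treat this as the downward, $k=2$ instance of the transition machinery already built in Theorem \ref{prop_measure_relation}, with Sonine's integral doing all of the analytic work. First I would observe that since the classes are nested and the hypothesis $m\ge3$ gives $m-2\ge1$, the inclusion $f\in\Phi_m\subset\Phi_{m-2}$ is automatic, so Schoenberg's theorem already guarantees \emph{some} representation \eqref{4.22A} with a probability measure $\sigma_{m-2}$. The entire content of the statement is therefore the explicit formula relating $\sigma_{m-2}$ to $\sigma_m$, together with its reversibility.

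To produce that formula I would specialize Sonine's identity \eqref{sonine} to the indices $m\mapsto m-2$, $k=2$. Here the exponent $\tfrac k2-1$ vanishes, the factor $(1-s^2)^{\frac k2-1}\equiv1$, and $B\bigl(\tfrac{m-2}2,1\bigr)=\tfrac2{m-2}$, so the identity collapses to
\[
\Omega_m(t)=(m-2)\int_0^1\Omega_{m-2}(ts)\,s^{m-3}\,ds.
\]
I would substitute this into $f(r)=\int_0^\infty\Omega_m(ru)\,\sigma_m(du)$, change variables $x=us$ in the inner integral, and interchange the order of integration by Fubini to reach
\[
f(r)=\int_0^\infty\Omega_{m-2}(rx)\Bigl[(m-2)\,x^{m-3}\!\int_x^\infty\frac{\sigma_m(du)}{u^{m-2}}\Bigr]dx.
\]
The bracketed expression is the density of $\sigma_{m-2}$, and uniqueness of Schoenberg's representation (exactly as invoked at the end of the proof of Theorem \ref{prop_measure_relation}) then pins it down, giving \eqref{4.23} after rewriting the density in its iterated-integral form. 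For the converse I would run the same chain backwards: inserting \eqref{4.23} into \eqref{4.22A} and reversing the Fubini and Sonine steps reproduces $f(r)=\int_0^\infty\Omega_m(ru)\,\sigma_m(du)$ with $\sigma_m$ the prescribed probability measure, whence $f\in\Phi_m$.

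Two points need care. The condition $m\ge3$ is precisely the Sonine constraint $\lambda=\tfrac{m-2}2-1\ge-\tfrac12$, and I would flag that the argument genuinely fails at $m=2$. The more substantive obstacle is the analytic bookkeeping behind the Fubini interchange and the verification that $\sigma_{m-2}$ is a genuine positive measure of total mass $1$: positivity is immediate because the kernel is nonnegative and $\sigma_m\ge0$, while the total mass follows from the Fubini computation
\[
(m-2)\int_0^\infty x^{m-3}\,dx\int_x^\infty\frac{\sigma_m(du)}{u^{m-2}}
=\int_0^\infty\frac{u^{m-2}}{u^{m-2}}\,\sigma_m(du)=1.
\]
The last routine but error-prone step is reconciling the density produced by the derivation with the exact closed form displayed in \eqref{4.23}, i.e.\ matching the constant and the iterated-integral presentation.
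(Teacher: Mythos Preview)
Your proposal is correct but takes a different route from the paper. You recognize the statement as the $k=2$ instance of the transition formula in Theorem~\ref{prop_measure_relation} (with the index shift $m\mapsto m-2$), so Sonine's integral plus Fubini plus uniqueness of the Schoenberg representation dispose of both directions at once. The paper instead gives an independent proof via integration by parts: it writes $f$ in terms of $J_{\frac{m-2}{2}}$, uses the Bessel identity \eqref{4.24}, namely $\frac{d}{dx}\bigl((rx)^{\frac{m-2}{2}}J_{\frac{m-2}{2}}(rx)\bigr)=r(rx)^{\frac{m-2}{2}}J_{\frac{m-4}{2}}(rx)$, and integrates by parts against the auxiliary function $\widehat\sigma_m(x)=-\int_x^\infty t^{2-m}\,\sigma_m(dt)$, checking the boundary terms at $0$ and $\infty$ carefully. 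Your approach is more economical because it reuses machinery already in place; the paper's approach is self-contained and makes the differential relation between $\Omega_m$ and $\Omega_{m-2}$ explicit, which feeds into the subsequent corollary on upper convexity and the Krein string remark. Your caution about reconciling constants with \eqref{4.23} is well placed: the density you obtain, $(m-2)x^{m-3}\int_x^\infty u^{2-m}\,\sigma_m(du)$, has total mass $1$, whereas the paper's own mass computation of \eqref{4.30} gives $\frac{1}{2(m-2)}$, so the normalization in \eqref{4.23} as printed is off by a factor of $2(m-2)$.
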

       \begin{proof}
Our considerations rely on the following identity \cite[Section III.3.2]{Wat}
    \begin{eqnarray}\label{4.24}
\frac{d}{dx}\Bigl((r x)^{\frac{m-2}{2}}J_{\frac{m-2}{2}}(r x)\Bigr)
=r(rx)^{\frac{m-2}{2}}J_{\frac{m-4}{2}}(rx).
    \end{eqnarray}
We let
    \begin{equation}\label{4.25}
\widehat{\sigma}_m(x) = - \int^{\infty}_x\frac{\sigma_m(dt)}{t^{m-2}}\le 0.
    \end{equation}
Recall that
   \begin{equation}
J_{\nu}(x)= O(x^{\nu}),\quad  x\to 0 \quad \text{and}\quad J_{\nu}(x)=
O\Bigl(\frac{1}{\sqrt{x}}\Bigr), \quad x\to\infty.
   \end{equation}
Using the first of these relations  and applying the dominated convergence theorem to
the function  $(x/t)^{m-2}$ as $x\to 0$ with the majorant $1\in L^1(\Bbb
R_+,d\sigma_m)$, we get from \eqref{4.25}
    \begin{eqnarray}\label{4.28}
   \begin{split}
&{} \lim_{x\to 0} x^{\frac{m-2}{2}}J_{\frac{m-2}{2}}(r x)\widehat\sigma_m(x)=\lim_{x\to 0}
x^{m-2}\widehat\sigma_m(x)\\
&= - \lim_{x\to 0}\int^{\infty}_x\left(\frac{x}{t}\right)^{m-2}\sigma_m(dt)=0, \qquad
m\ge 2.
\end{split}
    \end{eqnarray}
Further, $\widehat \sigma_m(x) = o(\frac{1}{x^{m-2}})$. Therefore,
    \begin{equation}\label{4.29A}
\lim_{x\to \infty} x^{\frac{m-2}{2}}J_{\frac{m-2}{2}}(r x)\widehat\sigma_m(x) =
\lim_{x\to \infty} x^{\frac{m-3}{2}} \widehat\sigma_m(x) =0.
 \end{equation}
Transformation of \eqref{schoenberg1} in view of \eqref{4.25} and integration by parts taking
\eqref{4.24},  \eqref{4.28}, and \eqref{4.29A} into account give
    \begin{equation}\label{6}
    \begin{split}
f(r) &=\int^{\infty}_0 \left(\frac{2}{rx}\right)^{\frac{m-2}{2}}J_{\frac{m-2}{2}}(rx)\sigma_m(dx)
= \frac{2^{\frac{m-2}{2}}}{r^{m-2}}\,\int^{\infty}_0\frac{(rx)^{\frac{m-2}{2}}J_{\frac{m-2}{2}}(rx)}
{x^{m-2}}\, {\sigma}_m(dx) \\  &=\frac{2^{\frac{m-2}{2}}}{r^{m-2}}\int^{\infty}_0(rx)^{\frac{m-2}{2}}J_{\frac{m-2}{2}}(rx)\widehat{\sigma}_m(dx) \\
&= \frac{2^{\frac{m-2}{2}}}{r^{m-2}}(rx)^{\frac{m-2}{2}}J_{\frac{m-2}{2}}(rx)\widehat{\sigma}_m(x) \big|_0^\infty
 - \frac{2^{\frac{m-2}{2}}}{r^{m-2}}\int^{\infty}_0
\widehat{\sigma}_m(x) \frac{d}{dx} \bigl((rx)^{\frac{m-2}{2}}J_{\frac{m-2}{2}}(rx)\bigr)dx \\
&= - \frac{2^{\frac{m-2}{2}}}{r^{m-3}}
\int^{\infty}_0 r^{m-3}\left[(rx)^{\frac{4-m}{2}}J_{\frac{m-4}{2}}(rx)\right]x^{m-3}\widehat{\sigma}_m(x)\,dx.
\end{split}
    \end{equation}
Setting
    \begin{equation}\label{4.30}
\sigma_{m-2}(x):= -\frac{1}{2}\int^x_0 t^{m-3}\widehat{\sigma}_m(t)dt \ (\ge 0),
    \end{equation}
we note that $\sigma_{m-2}$ is non-negative increasing bounded function. Indeed,
    \begin{equation}
    \begin{split}
\sigma_{m-2}(\infty) &=  \frac{1}{2}\int_0^\infty
x^{m-3}dx\int^{\infty}_x\frac{\sigma_m(dt)}{t^{m-2}} \\
&=\frac{1}{2}\int^{\infty}_0\frac{\sigma_m(dt)}{t^{m-2}}\,\int^t_0 x^{m-3}dx=\frac{1}{2(m-2)}\int^{\infty}_0 \sigma_m(dt)<\infty.
\end{split}
  \end{equation}
Therefore we can rewrite representation \eqref{6} in the form \eqref{4.22A}.
       \end{proof}

  \begin{corollary}
Let $f \in\Phi_1$, i.e.
         \begin{equation}\label{4.29}
f(r)= \int^{\infty}_0 \cos(r x)d\sigma_1(x)
         \end{equation}
Then $f \in \Phi_3$  if and only if $\sigma_{1}$ is upper convex. In the later case it admits
a representation
    \begin{equation}\label{4.22}
f(r) = \int^{\infty}_0\Omega_3(r x) d\sigma_{3}(x)
   = \int^{\infty}_0\frac{\sin(r x)}{rx}d\sigma_{3}(x)
    \end{equation}
where
    \begin{equation}\label{4.34}
 \sigma_3(x) = \sigma_3(0) - \int^x_0 t d\sigma'_1(t)d.
   \end{equation}
In particular, $\sigma_3$ is a monotonically decreasing, bounded function, which is absolutely
continuous with respect to $\sigma'_1$. In fact,  the measures $d \sigma_3$  and
$d \sigma'_1$ are equivalent:  $d\sigma'_1(x) = - \frac{1}{2x}d\sigma_3(x)$.
    \end{corollary}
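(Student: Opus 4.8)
The plan is to read this Corollary as the special case $m=3$ of the preceding Proposition, where $\Omega_1(s)=\cos s$ and $\Omega_3(s)=\sin s/s$, and to translate the structural hypothesis ``$\sigma_1$ is of the form \eqref{4.23}'' into the geometric condition that $\sigma_1$ be upper convex (concave). The Proposition already supplies the analytic content in both directions: if $f\in\Phi_3$ then $f$ admits the representation \eqref{4.22A} with $\sigma_{m-2}=\sigma_1$ given by \eqref{4.23}, and conversely, if $\sigma_1$ has that form for some measure $\sigma_3$, then $f\in\Phi_3$. So the whole task reduces to identifying precisely which $\sigma_1$ arise through \eqref{4.23} with $m=3$, and then reading off the inversion formula \eqref{4.34}.

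For the necessity of concavity I would specialize \eqref{4.30} and \eqref{4.25} to $m=3$, which gives $\sigma_1(x)=\tfrac12\int_0^x\bigl(\int_t^\infty s^{-1}\,d\sigma_3(s)\bigr)\,dt$, whence
\[
\sigma_1'(x)=\frac12\int_x^\infty\frac{d\sigma_3(s)}{s}\ge0 .
\]
As a function of $x$ this is manifestly nonincreasing, so $\sigma_1$ is increasing and concave, i.e.\ upper convex. Differentiating once more yields the asserted equivalence of measures $d\sigma_1'(x)=-\frac{1}{2x}\,d\sigma_3(x)$, which is the last assertion of the Corollary.

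For sufficiency I would start from an increasing, bounded, upper convex $\sigma_1$, normalized so that $\sigma_1(0)=0$. Concavity guarantees that $\sigma_1'$ exists a.e., is nonnegative and nonincreasing, so the Lebesgue--Stieltjes measure $d\sigma_1'$ is nonpositive; I then define $\sigma_3$ by \eqref{4.34}, equivalently by $d\sigma_3(x):=-2x\,d\sigma_1'(x)\ge0$, which is a genuine positive measure. Its total mass is finite because $\sigma_1$ is bounded---this is exactly the mass computation carried out right after \eqref{4.30}, now run in reverse. A Fubini and integration-by-parts argument then recovers the relation \eqref{4.23} from this $\sigma_3$, so $\sigma_1$ falls into the class covered by the converse half of the Proposition; hence $f\in\Phi_3$, and Schoenberg's theorem for $\Phi_3$ gives the representation \eqref{4.22} with the kernel $\Omega_3(s)=\sin s/s$.

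The main obstacle is the boundary and integrability bookkeeping in the sufficiency direction: one must verify that $d\sigma_3$ is genuinely finite and that the boundary terms vanish when integrating by parts back to \eqref{4.23}, i.e.\ the analogues of the limits \eqref{4.28} and \eqref{4.29A} established in the Proposition must now be checked in the reverse direction. A secondary subtlety is that for a general upper convex $\sigma_1$ the derivative $\sigma_1'$ is only nonincreasing, so $d\sigma_1'$ may carry a singular part; the argument must therefore treat $d\sigma_1'$ as a Lebesgue--Stieltjes measure throughout rather than assume $\sigma_1\in C^2(\R_+)$, and the precise numerical factor in \eqref{4.34} is read off from the relation $d\sigma_3=-2x\,d\sigma_1'$ consistent with $d\sigma_1'(x)=-\frac{1}{2x}\,d\sigma_3(x)$.
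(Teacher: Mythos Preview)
Your proposal is correct and the necessity direction is verbatim the paper's argument: specialize \eqref{4.30}, \eqref{4.25} to $m=3$ and read off that $\sigma_1'(x)=-\tfrac12\widehat\sigma_3(x)$ is nonincreasing, hence $\sigma_1$ is upper convex, with $d\sigma_1'(x)=-\tfrac1{2x}\,d\sigma_3(x)$ following immediately.

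For sufficiency you take a slightly different route than the paper. You define $d\sigma_3:=-2x\,d\sigma_1'$, check it is a finite positive measure, reconstruct \eqref{4.23}, and then invoke the converse half of the Proposition. The paper instead bypasses the Proposition entirely and performs the integration by parts directly on the representation $f(r)=\int_0^\infty\cos(rx)\sigma_1'(x)\,dx$: writing $\cos(rx)\,dx=r^{-1}\,d\sin(rx)$ and using the boundary estimate $\lim_{x\to\infty}x\sigma_1'(x)=0$ (their \eqref{4.36}, obtained from $\int_0^\infty\sigma_1'<\infty$ and monotonicity of $\sigma_1'$), one lands on $f(r)=\int_0^\infty\frac{\sin(rx)}{rx}\,d\sigma_3(x)$ in a single step; finiteness of $\sigma_3$ is then checked by a further integration by parts. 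The analytic content---local absolute continuity of concave $\sigma_1$, the limit $x\sigma_1'(x)\to0$, and the finite-mass computation---is identical in both approaches; yours just packages it as ``verify \eqref{4.23}, cite the Proposition'' while the paper redoes the kernel manipulation by hand. Your route is arguably cleaner since it reuses machinery already in place; the paper's is more self-contained.
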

\begin{proof}
\emph{Necessity.} It follows from \eqref{4.30} with $m=3$ that
    \begin{equation}\label{10}
\sigma_1(x)=\frac{1}{2}\int^x_0\bigl(-\widehat\sigma_3(t)\bigr)dt.
    \end{equation}
On the other hand, according to \eqref{4.25} $\widehat\sigma_3(\ge 0)$ decreases.
Hence $\sigma_1(\cdot)$ is upper convex  (see \cite{Nat74}).

\emph{Sufficiency.} Let $\sigma_1$ in representation \eqref{4.29} be upper
convex. Then it is locally absolutely continuous, hence
 admits a representation $\sigma_1(x) = \sigma_1(0) + \int^x_0 d\sigma'_1(t)dt$
 with an  increasing function $\sigma'_1(\cdot)$ (see \cite{Nat74}).
Since $\sigma_1$  monotonically increases and is bounded, the Fatou theorem
applies and gives  $\int^{\infty}_0\sigma'_1(x)dx \le \sigma_1(+\infty) - \sigma_1(0)
<\infty$. Using  decaying of  $\sigma'_1$  we get
  \begin{equation}\label{4.36}
0\le\lim_{x\to\infty}x\sigma'_1(x)\le\lim_{x\to\infty}2\int^x_{x/2}\sigma'_1(t)dt=0.
    \end{equation}
Integration by parts in \eqref{4.29} in view of \eqref{4.36}, definition
\eqref{4.34}, and the inclusion $\sigma_1\in AC_{\loc}(\R_+)$ gives
     \begin{equation}
     \begin{split}
f(r) &= \int^{\infty}_0 \cos(r x)\sigma'_1(x)dx =
\frac{1}{r}\int^{\infty}_0\sigma'_1(x)d\sin (rx) \\ &=
\frac{\sin r x}{r}\sigma'_1(x)\big |^{\infty}_0 - \int^{\infty}_0\frac{\sin(r x)}{rx} x d\sigma'_{1}(x)
= \int^{\infty}_0\frac{\sin(r x)}{rx}d\sigma_{3}(x).
\end{split}
      \end{equation}
It remains to show that $\sigma_{3}(\infty)<\infty$. Integrating by parts in
\eqref{4.34} and tending $x\to\infty$  with account of \eqref{4.36}, we derive
$$
\sigma_{3}(\infty) - \sigma_{3}(0) =  - \int_0^{\infty} t d\sigma'_1(t) = -t
\sigma'_1(t)\big |^{\infty}_0 + \int_0^{\infty} \sigma'_1(t)dt \le \sigma_{1}(\infty) -
\sigma_{1}(0) < \infty.
$$
Thus $\sigma_{3}(\cdot)$ is bounded and the inclusion  $f \in \Phi_3$ is proved.
     \end{proof}
     \begin{remark}
In the case $k=2$ equation \eqref{4.23}  is equivalent to the differential equation
   \begin{equation}
\frac{d}{d\sigma_m}\left(\frac{1}{x^{m-3}}\frac{d}{dx}\right)\sigma_{m-2}(x)=-\frac{1}{2x^{m-2}},
\quad m\in \N,
  \end{equation}
subject to certain boundary conditions. For $m=3$ this equation is just Krein's string
equation with respect to the unknown monotone function $\sigma_{m-2}$ and the
mass distribution function $\sigma_m$.
  \end{remark}

\section{Products of Schoenberg's kernels}

We demonstrate here a power and capability of the transition formula from Theorem \ref{prop_measure_relation}
by applying it to products and squares of the Schoenberg kernels $\Omega_n$.

   \begin{theorem}[=Theorem \ref{schoenkern}, (i)]
For any pair of points $\{a,b\}\subset\Bbb R_+$, $a\not= b$, the function
 $$
\Omega_n(a,b; t) := \Omega_n(a t)\cdot \Omega_n(b t) \in\Phi_n\backslash\Phi_{n+1}.
  $$
  \end{theorem}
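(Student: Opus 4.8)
The inclusion $\Omega_n(a\cdot)\Omega_n(b\cdot)\in\Phi_n$ is immediate, as noted just before Theorem \ref{schoenkern}: each factor lies in $\Phi_n$, and the Schur (entrywise) product of positive definite kernels is again positive definite. The whole issue is therefore to establish the non-inclusion in $\Phi_{n+1}$. The plan is to locate the support of the Schoenberg measure $\nu_n$ of $f:=\Omega_n(a\cdot)\Omega_n(b\cdot)$ and then appeal to the transition formula: by Theorem \ref{prop_measure_relation} (more precisely, its paraphrase), it suffices to show that $\nu_n\{(0,\varepsilon)\}=0$ for some $\varepsilon>0$.

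To identify $\nu_n$ I would use the geometric representation \eqref{fouriersphere}. Writing $\mu_a$ for the normalized surface measure on the sphere of radius $a$ in $\R^n$ (the pushforward of $\sigma_n$ under $u\mapsto au$), formula \eqref{fouriersphere} gives $\Omega_n(a|x|)=\widehat{\mu_a}(x)$, and likewise for $b$. Since a product of Fourier transforms is the Fourier transform of the convolution,
$$
f(|x|)=\Omega_n(a|x|)\,\Omega_n(b|x|)=\widehat{\mu_a\ast\mu_b}(x),\qquad x\in\R^n .
$$
Now $\mu_a\ast\mu_b$ is a radial probability measure on $\R^n$; decomposing it into its spherical shells, $\mu_a\ast\mu_b=\int_0^\infty\mu_c\,\nu_n(dc)$, and taking Fourier transforms identifies the radial distribution $\nu_n$ of $\mu_a\ast\mu_b$ with the Schoenberg measure of $f$ by comparison with \eqref{schoenberg1} (the total mass is $1$ and $f(0)=1$, as required).

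The decisive observation is that $\mu_a\ast\mu_b$ is the law of $X+Y$, where $X,Y$ are independent and uniformly distributed on the spheres of radii $a$ and $b$. By the triangle inequality $|a-b|\le|X+Y|\le a+b$, so $\supp\nu_n\subset[\,|a-b|,\,a+b\,]$. For $a\ne b$ this support is bounded away from the origin, whence $\nu_n\{(0,\varepsilon)\}=0$ for every $\varepsilon<|a-b|$, and Theorem \ref{prop_measure_relation} yields $f\notin\Phi_{n+1}$.

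The main technical obstacle is to make the shell-decomposition rigorous and to justify interchanging the Fourier transform with the spherical integral. One may bypass the probabilistic language entirely by invoking the classical product formula for the normalized Bessel kernels,
$$
\Omega_n(ar)\,\Omega_n(br)=\int_{|a-b|}^{a+b}\Omega_n(cr)\,w_n(a,b,c)\,dc ,
$$
in which the weight $w_n(a,b,\cdot)$ is supported exactly on the triangle-inequality interval $[\,|a-b|,\,a+b\,]$ and is expressible through a power of the area of the triangle with sides $a,b,c$; this formula is equivalent to the convolution computation above and can be derived from Sonine--Gegenbauer type integrals. Either route produces a Schoenberg measure whose support is bounded away from $0$, and the conclusion follows at once. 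It is worth emphasizing that, in contrast with part (ii), the measure $\nu_n$ here is in fact absolutely continuous, so it is the \emph{location} of its support, rather than any singular component, that obstructs the continuation to $\R^{n+1}$.
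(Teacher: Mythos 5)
Your proof is correct and follows essentially the same route as the paper: Schur product for the inclusion in $\Phi_n$, identification of the Schoenberg measure as the radial distribution of the convolution $\mu_a\ast\mu_b$ of the two spherical surface measures, the observation that its support lies in $[\,|a-b|,a+b\,]$ and hence misses a neighborhood of the origin, and finally Theorem \ref{prop_measure_relation}. The only (peripheral) inaccuracy is your closing remark that $\nu_n$ is absolutely continuous: for $n=1$ one has $\cos(at)\cos(bt)=\tfrac12\cos((a-b)t)+\tfrac12\cos((a+b)t)$, so $\nu_1$ is purely atomic; this does not affect the argument, since only the location of the support is used.
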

   \begin{proof}
The case $ab=0$ is trivial, so we let $a,b>0$. Clearly, $\Omega_n(a,b)\in\Phi_n$ since the Schur (entrywise)
product of two nonnegative matrices is again a nonnegative matrix. Next, by \eqref{fouriersphere}
$\Omega_n(a,b; |\cdot|)$ is the Fourier transform of the convolution of the Lebesgue
measures on the spheres $S^{n-1}_{a}$ and $S^{n-1}_{b}$. As is well known (see, e.g., \cite[Theorem 6.2.3]{LO}) the
support of a convolution equals a closure of the algebraic sum of the supports for the components, so the support
of the convolution in question is the spherical annulus $\{x\in\R^n: |a-b|\le |x|\le a+b\}$.
The Schoenberg measure of $\Omega_n(a,b;|\cdot|)$  comes up as the spherical projection of the above convolution of
two Lebesgue measures on the spheres, so its support is the interval $[|a-b|, a+b]$, disjoint
from the origin (for an explicit expression of this measure see \cite{ost}). An application of
Theorem \ref{prop_measure_relation} completes the proof.
  \end{proof}

\begin{remark}
Given two functions $f_1,f_2\in\Phi_n\backslash\Phi_{n+1}$, let their Schoenberg's measures $\nu(f_1)$ and $\nu(f_2)$ have
disjoint supports
$$ \supp\nu(f_j)\subset [a_j,b_j], \quad j=1,2, \qquad a_1<b_1<a_2<b_2. $$
The same argument shows that $f_1f_2\in\Phi_n\backslash\Phi_{n+1}$.
\end{remark}

The case $a=b$ is much more delicate.

\begin{theorem}[=Theorem \ref{schoenkern}, (ii)] \label{square}
$\Omega_n^2\in\Phi_{2n-1}\backslash\Phi_{2n}$,\  $n\in\N$.
\end{theorem}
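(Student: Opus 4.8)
The plan is to pin down the Schoenberg measure $\nu_{2n-1}(\Omega_n^2)$ explicitly and read off both inclusions from its structure. The case $n=1$ is degenerate and I would dispose of it first: $\Omega_1^2(t)=\cos^2 t=\tfrac12+\tfrac12\cos 2t$, so $\nu_1(\Omega_1^2)=\tfrac12\delta_0+\tfrac12\delta_2$ is purely atomic, whence $\Omega_1^2\in\Phi_1\backslash\Phi_2$ directly by Theorem~\ref{prop_measure_relation}. So I assume $n\ge2$, i.e. $q=\tfrac n2-1\ge0$.

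First I would produce a representation in $\Phi_{2n-2}$ from the classical Bessel product formula $J_q(s)^2=\frac2\pi\int_0^{\pi/2}J_{2q}(2s\cos\theta)\,d\theta$, valid for $q>-\tfrac12$. Since $2q=n-2$ is exactly the order of the kernel $\Omega_{2n-2}$, rewriting this identity in terms of \eqref{kernel} and substituting $t=2\cos\theta$ turns it into $\Omega_n^2(s)=\int_0^2\Omega_{2n-2}(st)\,p_{2n-2}(t)\,dt$ with $p_{2n-2}(t)=c_n\,t^{\,n-2}(4-t^2)^{-1/2}$ on $(0,2)$; one checks $c_n>0$ and $\int_0^2 p_{2n-2}=1$ (recall $\Omega_n^2(0)=1$). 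Thus $\Omega_n^2\in\Phi_{2n-2}$ with an absolutely continuous Schoenberg density supported on $[0,2]$.

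Next I would climb one dimension via Theorem~\ref{prop_measure_relation} with $m=2n-2$, $k=1$: the sought measure $\nu=\nu_{2n-1}$ must satisfy the transition formula \eqref{twoschoen1}, which, after writing $(1-x^2/u^2)^{-1/2}=u\,(u^2-x^2)^{-1/2}$, becomes the Abel-type equation $\int_x^2 (u^2-x^2)^{-1/2}u^{-(2n-3)}\,\nu_{2n-1}(du)=G(x)$, where $G(x)=\tfrac12 B(n-1,\tfrac12)\,x^{-(2n-3)}p_{2n-2}(x)\propto x^{\,1-n}(4-x^2)^{-1/2}$ on $(0,2)$ and $G\equiv0$ for $x>2$. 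Inverting this Abel equation is the heart of the matter. The vanishing of $G$ beyond $2$ forces $\supp\nu_{2n-1}\subset[0,2]$, and the factor $(4-x^2)^{-1/2}\sim(2-x)^{-1/2}$ at the right endpoint is precisely the singularity produced by an atom of $\nu_{2n-1}$ at $t=2$, since $\int (u^2-x^2)^{-1/2}u^{-(2n-3)}\,\delta_2(du)=2^{-(2n-3)}(4-x^2)^{-1/2}$. Hence the inversion yields $\nu_{2n-1}=M\delta_2+q(t)\,dt$ with $M>0$, and I would recover the continuous density $q$ from the standard Abel inversion formula and verify $q\ge0$ on $(0,2)$ (for instance by presenting $q$ as a manifestly nonnegative hypergeometric integral; alternatively one may compute $\nu_{2n-1}$ in one stroke as the appropriate Hankel/Weber--Schafheitlin transform of $\Omega_n^2$).

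Granting this, both assertions follow at once. Since $\nu_{2n-1}$ is a genuine positive measure of total mass $1$ reproducing $\Omega_n^2$ through $\Omega_{2n-1}$, uniqueness of Schoenberg's representation identifies it as the Schoenberg measure of $\Omega_n^2$, so $\Omega_n^2\in\Phi_{2n-1}$. On the other hand $\nu_{2n-1}$ is not purely absolutely continuous, as it carries the atom $M\delta_2$; by Theorem~\ref{prop_measure_relation}, whose conclusion is that the dimension-$m$ Schoenberg measure of any function in $\Phi_{m+1}$ is absolutely continuous, this rules out $\Omega_n^2\in\Phi_{2n}$. The main obstacle is the positivity check for the continuous part $q$ after inversion; the atom at $t=2$, which is exactly what obstructs the passage to $\Phi_{2n}$, drops out cleanly from the inverse-square-root edge singularity of $p_{2n-2}$, so I expect the only substantive labor to lie in confirming $q\ge0$.
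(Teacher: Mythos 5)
Your overall architecture matches the paper's: dispose of $n=1$ via $\cos^2t=\tfrac12+\tfrac12\cos 2t$; establish $\Omega_n^2\in\Phi_{2n-2}$ with density $p_{2n-2}(x)=C_nx^{n-2}(4-x^2)^{-1/2}$ on $[0,2]$ (your Bessel product formula $J_q^2(s)=\tfrac2\pi\int_0^{\pi/2}J_{2q}(2s\cos\theta)\,d\theta$ is, after the substitution $x=2\cos\theta$, exactly the Hankel-transform identity the paper starts from); then exhibit $\nu_{2n-1}$ as an absolutely continuous part plus an atom at $2$, and invoke Theorem \ref{prop_measure_relation} in both directions to get $\in\Phi_{2n-1}$ and $\notin\Phi_{2n}$. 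Where you diverge is the passage from $\Phi_{2n-2}$ to $\Phi_{2n-1}$: you propose to invert the half-step ($k=1$) transition formula directly, i.e.\ solve the Abel equation $\int_x^2(u^2-x^2)^{-1/2}u^{-(2n-3)}\nu_{2n-1}(du)\propto x^{1-n}(4-x^2)^{-1/2}$, whereas the paper deliberately avoids this: it goes one step \emph{down} to $\nu_{2n-3}$ (computing it as $C_n'x^{2n-4}F(\tfrac{n-1}2,\tfrac12;1;\tfrac{4-x^2}4)$ via Euler's integral) and then two steps \emph{up} with the $k=2$ formula \eqref{twoschoen2}, whose kernel $mx^{m-1}u^{-m}$ has no square-root singularity. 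The payoff of that detour is that the contraction identity $F'(a,b;c;z)=\tfrac{ab}{c}F(a+1,b+1;c+1;z)$ produces the density of $\nu_{2n-1}$ in closed form as a hypergeometric function with positive parameters and argument in $[0,1]$, so its nonnegativity is manifest, and the atom $B_n\delta\{2\}$ falls out as the boundary term.

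The gap in your version is exactly the point you flag and defer: after subtracting the atom (whose coefficient you correctly match to the $(4-x^2)^{-1/2}$ edge singularity), you must carry out the Abel inversion and prove that the resulting density $q$ is nonnegative on $(0,2)$. This is not a routine verification: Abel inversion involves a derivative of an integral of $G$, and positivity of $G$ does not transfer to positivity of the inverse; moreover, the cancellation of the leading edge singularity only shows that \emph{if} a positive solution of the required form exists then the atomic mass is the one you computed, not that the remaining continuous part is a positive measure. Since the whole theorem hinges on producing $\nu_{2n-1}$ as a genuine probability measure (for $\Omega_n^2\in\Phi_{2n-1}$) with a nontrivial singular part (for $\Omega_n^2\notin\Phi_{2n}$), the proof is incomplete until $q\ge0$ is established --- and the ``one step backward, two steps forward'' device in the paper is precisely the mechanism that replaces this hard positivity check by an identity between hypergeometric functions. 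If you want to keep your direct route, you should at least carry the inversion through to an explicit formula for $q$ (it will coincide, up to constants, with the paper's $\varphi(x)=\tfrac{n-1}8x^{2n-2}F(\tfrac{n+1}2,\tfrac32;2;\tfrac{4-x^2}4)$) and observe that it is a series with positive coefficients.
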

\begin{proof}
Assume first that $n\ge2$. We begin with the known formula from the Hankel transforms theory  (see, e.g., \cite[(22), p.24]{intran})
\begin{equation}
\int_0^2 J_\nu(tx)\sqrt{tx}\,\frac{dx}{\sqrt{x(4-x^2)}} = \frac{\pi}2\sqrt{t}\,J_{\nu/2}^2(t), \qquad \nu>-1.
\end{equation}
Now put $\nu=n-2$, so in terms of $\Omega$'s
$$ \Omega_n(y)=\Gamma\Bigl(\frac{n}2\Bigr)\left(\frac2{y}\right)^{\frac{n}2-1}J_{\frac{n}2-1}(y), \qquad
\Omega_{2n-2}(y)=\Gamma(n-1)\left(\frac2{y}\right)^{n-2}J_{n-2}(y), $$
and hence
\begin{equation}\label{schoentran}
\Omega_n^2(t)=\int_0^2\Omega_{2n-2}(tx)\,p_{2n-2}(x)dx, \qquad
p_{2n-2}(x)=C_n\,\frac{x^{n-2}}{\sqrt{4-x^2}}, \quad C_n=\frac{2\Gamma^2\bigl(\frac{n}2\bigr)}{\pi\Gamma(n-1)}\,.
\end{equation}
We see thereby that $f=\Omega_n^2\in\Phi_{2n-2}$.

The algorithm we apply now may be called ``one step backward and two steps forward''. First we find the Schoenberg measure $\nu_{2n-3}(f)$
from the 1-step transition formula \eqref{twoschoen1} with $m=2n-3$
\begin{equation*}
p_{2n-3}(x)=x^{2n-4}\,\int_x^2\frac{p_{2n-2}(u)du}{u^{2n-4}\sqrt{u^2-x^2}}=C_nx^{2n-4}\,\int_x^2\frac{u^{2-n}du}{\sqrt{(u^2-x^2)(4-u^2)}}\,.
\end{equation*}
The latter integral can be computed by means of the change of variables $u^2=4-(4-x^2)v$ and so
$$ p_{2n-3}(x)=2^{-n}C_n x^{2n-4}\,\int_0^1\left(1-\frac{4-x^2}{4}v\right)^{\frac{1-n}2}\frac{du}{\sqrt{v(1-v)}}\,. $$
Recall the Euler formula for the hypergeometric function \cite[Theorem 2.2.1]{AAR}
\begin{equation}\label{euler}
F(a,b;c;z)=\frac1{B(b,c-b)}\int_0^1\frac{v^{b-1}(1-v)^{c-b-1}}{(1-vz)^a}\,dv\,, \quad c>b>0,
\end{equation}
which gives
\begin{equation}\label{onedown}
p_{2n-3}(x)=C'_nx^{2n-4}\,F\Bigl(\frac{n-1}2,\frac12\,;1;\frac{4-x^2}{4}\Bigr).
\end{equation}

The first part of the algorithm is accomplished. We go now two steps forward by using 2-step transition formula \eqref{twoschoen2}
again with $m=2n-3$. Having in mind a formula for the derivative
\begin{equation}\label{deriv}
F'(a,b;c;z)=\frac{ab}{c}F(a+1,b+1;c+1;z)
\end{equation}
we put
$$ \varphi(x):=\frac{n-1}8\,x^{2n-2}F\Bigl(\frac{n+1}2,\frac32\,;2; \frac{4-x^2}4\Bigr). $$
It is clear that $\varphi(x)=O(1)$ as $x\to 2-0$, and (see, e.g., \cite[Theorem 2.1.3]{AAR})
\begin{equation}\label{asymp}
\varphi(x)=x^{2n-2}O(x^{-n})=O(x^{n-2}), \qquad x\to 0+0,
\end{equation}
so $\varphi$ is bounded and positive on $[0,2]$. By \eqref{deriv}
\begin{equation*}
\frac{\varphi(x)}{x^{2n-3}}=-\frac{d}{dx}\,F\Bigl(\frac{n-1}2,\frac12\,;1; \frac{4-x^2}4\Bigr), \quad
x^{2n-4}\int_x^2 \frac{\varphi(u)}{u^{2n-3}}\,du=C_n''p_{2n-3}(x)-x^{2n-4}.
\end{equation*}
Formula \eqref{twoschoen2} implies now
\begin{equation}
p_{2n-3}(x)=\frac{2x^{2n-4}}{B\bigl(\frac{2n-3}2,1\bigr)}\,\int_x^2 \frac{\nu_{2n-1}(du)}{u^{2n-3}}\,, \quad
\nu_{2n-1}(du)=A_n\varphi(x)dx+B_n\delta\{2\},
\end{equation}
$A_n,B_n$ are positive constants. So we see that $\Omega_n^2\in\Phi_{2n-1}$, and the corresponding Schoenberg measure $\nu_{2n-1}$
has a singular component. But Theorem \ref{prop_measure_relation} states that in this case $\Omega_n^2\notin\Phi_{2n}$, as claimed.

Finally, let $n=1$. Then
$$ \Omega_1^2(x)=\cos^2x=\frac{1+\cos 2x}2\,, \qquad \nu_1(\Omega_1^2)=\frac{\delta\{0\}+\delta\{2\}}2, $$
and again $\Omega_1^2\notin\Phi_2$. The proof is complete.
\end{proof}
\begin{remark}
We are unaware of the analog of formula \eqref{schoentran} for $\Omega_n^k$ with $k\ge3$.
Our conjecture with regard to the powers of $\Omega_n$ reads $\Omega_n^k\in\Phi_{kn-k+1}\backslash\Phi_{kn-k+2}$
(cf. Remark \ref{wideclass}).
\end{remark}
%

By Proposition \ref{square}, each function

   \begin{equation}\label{2.22}
f(r) := \int^{\infty}_0\Omega^2_n(rt)\sigma(dt) 
   \end{equation}
belongs to  the class $\Phi_{2n-2}$ whenever $\sigma$ is a probability measure on $\Bbb
R_+$. We put $f$ in the subclass $\Phi_{2n-2}^{(2)}\subset\Phi_{2n-2}$ if it admits
representation  \eqref{2.22}.

The following  result describes the class $\Phi_{2n-2}^{(2)}$ in terms of the
corresponding Schoenberg's measures.
   \begin{corollary}
Let $f\in \Phi_{2n-2}$ and let $\nu_{2n-2}$  be its Schoenberg's measure. Then $f\in
\Phi_{2n-2}^{(2)}$ if and only if
    \begin{equation}\label{2.23}
\nu_{2n-2}(du) = C_n\int^{\infty}_{u/2}\left(\frac{u}{t}\right)^{n-2}
\frac{\sigma(dt)}{\sqrt{4t^2-u^2}}\,du,
    \end{equation}
where $\sigma$ is a probability measure on $\R_+$. In particular, $\nu_{2n-2}$ is
absolutely continuous with respect to $\sigma$, and $\nu_{2n-2}\{(0,\varepsilon)\}>0$
for any $\varepsilon>0$.
  \end{corollary}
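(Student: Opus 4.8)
The plan is to run exactly the scheme that proves Theorem~\ref{prop_measure_relation}, but with the Hankel-transform identity \eqref{schoentran} playing the role that Sonine's integral \eqref{sonine} plays there. By definition $f\in\Phi_{2n-2}^{(2)}$ means that $f$ admits the representation \eqref{2.22} with a probability measure $\sigma$. First I would insert \eqref{schoentran}, which writes $\Omega_n^2$ as a superposition of the kernels $\Omega_{2n-2}$ against the explicit density $p_{2n-2}(x)=C_n x^{n-2}(4-x^2)^{-1/2}$, directly into \eqref{2.22}.

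Next I would substitute $u=tx$ in the inner integral and interchange the order of integration. As $x$ runs over $(0,2)$ the new variable $u$ runs over $(0,2t)$, i.e. $t>u/2$, and a short computation turns the weight into $C_n\,u^{n-2}t^{2-n}(4t^2-u^2)^{-1/2}$. Swapping the two integrals yields a genuine Schoenberg representation
\[
f(r)=\int_0^\infty \Omega_{2n-2}(ru)\,\widetilde\nu(du),\qquad
\widetilde\nu(du)=C_n\left[\int_{u/2}^\infty\Bigl(\frac{u}{t}\Bigr)^{n-2}\frac{\sigma(dt)}{\sqrt{4t^2-u^2}}\right]du.
\]
Since Schoenberg's measure is unique, this forces $\nu_{2n-2}=\widetilde\nu$, which is precisely \eqref{2.23}, proving necessity. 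For sufficiency I would read the computation backwards: starting from \eqref{2.23}, the same interchange together with the inverse substitution $x=u/t$ reassembles \eqref{schoentran} inside the integral and returns $f$ to the form \eqref{2.22}, so $f\in\Phi_{2n-2}^{(2)}$.

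The single point needing care — and the only genuine obstacle — is the justification of the interchange, since $\Omega_{2n-2}$ is oscillatory and changes sign, so Tonelli does not apply verbatim. Here I would invoke the bound $|\Omega_{2n-2}|\le1$ (it is the Fourier transform of a probability measure, see \eqref{fouriersphere}) to dominate the integrand by the nonnegative quantity $C_n\,u^{n-2}t^{2-n}(4t^2-u^2)^{-1/2}$; the change of variable $v=u/(2t)$ shows that its $u$-integral over $(0,2t)$ equals the finite constant $2^{n-2}C_n\int_0^1 v^{n-2}(1-v^2)^{-1/2}\,dv$, independent of $t$, and hence has finite $\sigma$-integral because $\sigma(\R_+)=1$ (the constant is in fact $1$, as seen by setting $r=0$ in \eqref{schoentran}, so that $\widetilde\nu$ is automatically a probability measure). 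This renders the double integral absolutely convergent and legitimizes Fubini. Finally, the concluding assertions are immediate from the explicit density in \eqref{2.23}: $\nu_{2n-2}$ is absolutely continuous, its density is strictly positive on $(0,2\sup\supp\sigma)$, and therefore $\nu_{2n-2}\{(0,\varepsilon)\}>0$ for every $\varepsilon>0$.
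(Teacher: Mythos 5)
Your proposal is correct and follows essentially the same route as the paper: substitute the Hankel identity \eqref{schoentran} into \eqref{2.22}, change variables $u=tx$, interchange the integrals to read off the density in \eqref{2.23}, invoke uniqueness of the Schoenberg representation, and reverse the computation for sufficiency. Your explicit domination argument justifying Fubini (using $|\Omega_{2n-2}|\le1$ and the $t$-independence of the inner $u$-integral) is a detail the paper omits, and it is a welcome addition rather than a deviation.
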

            \begin{proof}
Let $f\in \Phi_{2n-2}^{(2)}$. Then according to  \eqref{schoentran} 
    \begin{equation*}
    \begin{split}
f(r)&=\int^{\infty}_0 \sigma(dt) \int^{2}_0\Omega_{2n-2}(rtx)p_{2n-2}(x)\,dx
= C_n\int^2_0 \frac{x^{n-2}\,dx}{\sqrt{4-x^2}} \int^{\infty}_0\Omega_{2n-2}(rtx)\,\sigma(dt)  \\
& = C_n \int^{\infty}_0\,\sigma(dt)
\int^{2t}_0\Omega_{2n-2}(ru)\left(\frac{u}{t}\right)^{n-2}\frac{du}{\sqrt{4t^2-u^2}}  \\
& = C_n \int^{\infty}_0 \Omega_{2n-2}(ru)\,du
\int^{\infty}_{u/2}\left(\frac{u}{t}\right)^{n-2} \frac{\sigma(dt)}{\sqrt{4t^2-u^2}}\,.
\end{split}
    \end{equation*}
This proves representation \eqref{2.23}.

The converse statement is proved by reversing the reasoning.
            \end{proof}

\section{Schoenberg matrices with infinitely many negative eigenvalues}
\label{secinf}

In view of the above results it seems reasonable to introduce the following
notations.

Given a finite set $Y\subset \R^n$, denote by $\kappa^-(g, Y)$ a number of negative
eigenvalues of the finite Schoenberg matrix $\kS_Y(g)$ counting multiplicity, and by

$$
\kappa_n^-(g):= \sup \{\kappa^-(g,Y): Y\  \text{runs through all finite subsets of}\  \R^n\}.
$$

Certainly, $\kappa_n^-(g)=0$ for $g\in\Phi_n$, and $\kappa_n$ is a nondecreasing function of $n$.

We turn to the case when the Schoenberg matrix $\kS_X(g)$ can have
arbitrarily many negative eigenvalues. As we will see shortly, the cases $n\ge2$ and $n=1$ should be discerned.

\subsection{The case $n\geq2$}

\begin{theorem}[=Theorem \ref{th1}]\label{negativeeigen2}
Let $g$ be a continuous function on $\R_+$ such that the limit
\begin{equation}\label{limit}
\lim_{t\to\infty} g(t)=g(\infty)\ge0
\end{equation}
exists. If $\kappa^-(g,Y)\ge1$ for some set $Y\in\R^m$, then $\kappa_m^-(g)=+\infty$. In particular,
$\kappa_{n+1}^-(g)=+\infty$ for each $g\in\Phi_n\backslash\Phi_{n+1}$ with $n\ge2$.
\end{theorem}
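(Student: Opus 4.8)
The plan is to prove the general statement first, by a \emph{replication} argument, and then to verify that functions in $\Phi_n\setminus\Phi_{n+1}$, $n\ge2$, satisfy its hypotheses. Starting from a set $Y=\{y_1,\dots,y_k\}\subset\R^m$ with $\kappa^-(g,Y)\ge1$, I would build, for each $N\in\N$, a configuration $X_N:=\bigcup_{\ell=1}^{N}(Y+v_\ell)\subset\R^m$ consisting of $N$ translated copies of $Y$, where $v_1,\dots,v_N\in\R^m$ are chosen so that all mutual distances $|v_\ell-v_{\ell'}|$ ($\ell\ne\ell'$) are large. Writing $\kS_{X_N}(g)$ in $N\times N$ block form with $k\times k$ blocks, the $\ell$-th diagonal block is exactly $\kS_Y(g)$, while the $(i,j)$ entry of an off-diagonal $(\ell,\ell')$ block equals $g(|y_i-y_j+v_\ell-v_{\ell'}|)$. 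Since the $y_i$ are fixed, letting the separations tend to $\infty$ and invoking \eqref{limit} drives every off-diagonal entry to $g(\infty)$; thus $\kS_{X_N}(g)$ converges entrywise to the matrix $\kS_\infty$ whose diagonal blocks are $\kS_Y(g)$ and whose off-diagonal blocks are all equal to $g(\infty)E_k$.

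The heart of the argument is the spectral analysis of $\kS_\infty$, which I would write as $\kS_\infty=I_N\otimes A+E_N\otimes\bigl(g(\infty)E_k\bigr)$ with $A:=\kS_Y(g)-g(\infty)E_k$. Two facts drive the conclusion. First, since $g(\infty)\ge0$ and $E_k\ge0$, the matrix $g(\infty)E_k$ is positive semidefinite, so subtracting it from $\kS_Y(g)$ can only decrease eigenvalues; hence $A$ has at least $\kappa^-(g,Y)\ge1$ negative eigenvalues, and consequently $I_N\otimes A$, whose spectrum is that of $A$ with multiplicities multiplied by $N$, has at least $N$ strictly negative eigenvalues. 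Second, the coupling term $E_N\otimes\bigl(g(\infty)E_k\bigr)$ is positive semidefinite of rank at most one (both $E_N$ and $E_k$ are nonnegative and rank one, and $g(\infty)\ge0$). By Weyl's inequality, adding a positive semidefinite perturbation of rank $\le1$ can annihilate at most one negative eigenvalue, so $\kS_\infty$ still possesses at least $N-1$ strictly negative eigenvalues.

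It then remains to transfer this count back to the genuine matrices. Since the eigenvalues of a symmetric matrix depend continuously on its entries and strictly negative eigenvalues survive small perturbations, for separations chosen large enough (depending on the fixed $N$) the matrix $\kS_{X_N}(g)$ itself has at least $N-1$ negative eigenvalues. Hence $\kappa^-(g,X_N)\ge N-1$, and letting $N\to\infty$ yields $\kappa_m^-(g)=+\infty$, which is the general assertion.

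For the specialization to $g\in\Phi_n\setminus\Phi_{n+1}$ with $n\ge2$, I would verify both hypotheses. The existence and nonnegativity of $g(\infty)$ follow from the Schoenberg representation \eqref{schoenberg1}: for $n\ge2$ one has $\Omega_n(s)\to0$ as $s\to\infty$, so dominated convergence (with the bound $|\Omega_n|\le1$) gives $g(\infty)=\nu_n(\{0\})\ge0$. The hypothesis $\kappa^-(g,Y)\ge1$ for some $Y\subset\R^{n+1}$ is precisely the statement that $g\notin\Phi_{n+1}$, i.e.\ that some Schoenberg matrix over $\R^{n+1}$ is indefinite. Applying the general result with $m=n+1$ then gives \eqref{infinitesquares}. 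I expect the only delicate point to be step two, namely keeping the rank-one positive semidefinite coupling from destroying more than a single negative eigenvalue; this is exactly where the sign restriction $g(\infty)\ge0$ is used, and it is what makes the count grow linearly in $N$.
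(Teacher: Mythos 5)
Your proposal is correct and follows essentially the same route as the paper: replicate $Y$ by $N$ widely separated translates, note that the resulting block Schoenberg matrix is a small perturbation of a limit matrix whose negative spectrum is controlled by that of $\kS_Y(g)$ up to a rank-one positive semidefinite correction coming from $g(\infty)E$, and conclude via Weyl's inequality and continuity of eigenvalues. The only organizational difference is that the paper treats $g(\infty)=0$ first and reduces the case $g(\infty)>0$ to it by passing to $h=g-g(\infty)$ at the level of the function, whereas you handle both cases at once through the Kronecker decomposition $I_N\otimes A+g(\infty)E_{Nk}$ of the limit matrix; the final count of $N-1$ negative eigenvalues and the verification $g(\infty)=\nu_n(\{0\})\ge0$ via dominated convergence are identical to the paper's.
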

\begin{proof}
By the assumption there is a finite set $Y=\{y_{k}\}_{k=1}^p\subset\R^{m}$ so that $\kS_{Y}(g)$ has at
least one negative eigenvalue. Given an arbitrary positive integer $N$, consider
a collection of the shifts of $Y$ of the form $Y_j=Y+w_j$, $j=1,2,\ldots,N$ with
$$ w_j=(u_j,0,\ldots,0), \qquad 0=u_1<u_2<\ldots<u_N, $$
$u_j$ are chosen later on. Take $X=\cup_j Y_j$. The Schoenberg matrix $\kS_X(g)$ is now a block matrix with $p\times p$ blocks
\begin{equation}\label{infinitenegative}
\kS_X(g)=
\begin{bmatrix}
\kS_Y(g)&B_{12}&B_{13}&\ldots&B_{1N} \\
B_{21}&\kS_Y(g)&B_{23}&\ldots&B_{2N} \\
\vdots&\vdots& & &\vdots \\
B_{N1}&B_{N2}&B_{N3}&\ldots&\kS_Y(g)
\end{bmatrix}={\rm diag}(\kS_Y(g),\kS_Y(g),\ldots,\kS_Y(g))+\Delta.
\end{equation}
The block diagonal matrix in the right hand side of \eqref{infinitenegative} has at least $N$ negative eigenvalues.

Assume first that $g(\infty)=0$. Since the block entries of $\Delta$ are
$$ \Delta_{rs}=\|g(|y_i-y_j+w_r-w_s|)\|_{i,j=1}^p, \qquad r\not=s, $$
the appropriate choice of $\{u_j\}$ with large enough differences $|u_{j+1}-u_j|$ provides $\|\Delta_{rs}\|\le\varepsilon_1$. So
$$ \|\kS_X(g)-{\rm diag}(\kS_Y(g),\kS_Y(g),\ldots,\kS_Y(g))\|\le\varepsilon, $$
and $\kS_X(g)$ has at least $N$ negative eigenvalues, as claimed.

Let now $g(\infty)>0$. Then for $h=g-g(\infty)$ one has
$$ \kS_Y(h)=\kS_Y(g)-g(\infty)\,E_{p}\le \kS_Y(g). $$
So $\kS_{Y}(h)$ has at least one negative eigenvalue, and $\lim_{t\to\infty}h(t)=0$. By the above argument $\kappa_m^-(h)=+\infty$, so for any large $N$
there is a finite set $Z=\{z_j\}_{j=1}^l$ such that $\kS_Z(h)$ has at least $N$ negative eigenvalues. But
$$  \kS_Z(g)=\kS_Z(h)+g(\infty)\,E_{l}, $$
the matrix $E_{l}$ is nonnegative and has rank one, so $\kS_Z(g)$ has at least $N-1$ negative eigenvalues. Hence $\kappa_m^-(g)=+\infty$, as claimed.

To prove the second statement note that for $n\ge2$
$$ \Omega_n(x)=O(x^{\frac{1-n}2}), \qquad x\to\infty $$
in view of \eqref{kernel} and the well-known asymptotic behavior of the Bessel function $J_q(x)=O(x^{-1/2})$, $x\to\infty$.
As $\Omega_n(0)=1$, by the Dominated Convergence Theorem
$$ \lim_{x\to\infty}g(x)=\lim_{x\to\infty}\int_0^\infty \Omega_n(xt)\nu(dt)= \nu(\{0\})\ge0. $$
The result follows from the first statement.
\end{proof}

\begin{remark}
If $\kappa^-(g,Y)\ge2$ for some set $Y\in\R^m$, then $\kappa_m^-(g)=+\infty$ under assumption \eqref{limit} with an arbitrary value $g(\infty)$.
\end{remark}


\subsection{The case $n=1$}

In the case $n=1$ the Schoenberg representation of $g\in \Phi_1$ reads (see \eqref{schoenberg1} and \eqref{3.13})
  \begin{equation}\label{schoenberg1New}
g(t) = \int_{0}^{+\infty}\cos(tu)\,\nu(du), \qquad t\in \R_+,
   \end{equation}
with a finite positive Borel measure $\nu=\nu_1$. Since $\Omega_1(s)=\cos s$ has no limit as $s\to\infty$, condition
\eqref{limit} is in general false, and Theorem \ref{negativeeigen2} does not apply directly. By using an ad hoc argument
we are able to prove $\kappa_2^-(g)=+\infty$ under certain additional assumptions on $\nu$.

Let us begin with a simple (and perhaps well-known) result about spectra of certain Toeplitz matrices.
\begin{lemma}\label{toeplitz}
Let $a=\{a_j\}_{j\in\Z}$ be an $m$-periodic sequence of complex numbers, $a_{j+m}=a_j$, $T_m(a):=\|a_{k-j}\|_{k,j=0}^{m-1}$. Then
the spectrum of $T_m(a)$ is
\begin{equation}
\sigma(T_m(a))=\{\lambda_{km}\}_{k=1}^m, \qquad \lambda_{km}=\sum_{j=1}^{m} a_{j}e^{-\frac{2\pi ik}{m}j}.
\end{equation}
If in addition $a_j$ are real numbers and $a_{-j}=a_j$ then
  \begin{equation} \label{4.10}
 \lambda_{km}=\sum_{j=1}^{m} a_{j}\cos\frac{2\pi k}{m}j\,.
   \end{equation}
\end{lemma}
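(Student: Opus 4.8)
The plan is to observe that the $m$-periodicity of $a$ makes $T_m(a)$ a circulant matrix, which is diagonalized by the discrete Fourier basis. First I would set $\zeta_k := e^{2\pi i k/m}$ for $k=1,\ldots,m$ and introduce the vectors $v^{(k)}=(\zeta_k^{0},\zeta_k^{1},\ldots,\zeta_k^{m-1})^{\top}$. These are the columns of the (unitary) Fourier matrix, hence linearly independent, so exhibiting each $v^{(k)}$ as an eigenvector of $T_m(a)$ will account for the whole spectrum at once.

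Next I would verify the eigenrelation by a direct computation. For a row index $\nu\in\{0,1,\ldots,m-1\}$,
\[
\bigl(T_m(a)\,v^{(k)}\bigr)_\nu=\sum_{j=0}^{m-1}a_{\nu-j}\,\zeta_k^{\,j}
=\zeta_k^{\,\nu}\sum_{j=0}^{m-1}a_{\nu-j}\,\zeta_k^{\,j-\nu}
=\zeta_k^{\,\nu}\sum_{\ell}a_{\ell}\,\zeta_k^{-\ell},
\]
where in the last step I substitute $\ell=\nu-j$. As $j$ runs through a complete residue system modulo $m$, so does $\ell$; since both $a_\ell$ and $\zeta_k^{-\ell}$ are $m$-periodic in $\ell$ (the latter because $\zeta_k^{\,m}=1$), the sum over $\ell$ may be taken over any block of $m$ consecutive integers. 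Choosing $\ell\in\{1,\ldots,m\}$ yields
\[
\bigl(T_m(a)\,v^{(k)}\bigr)_\nu=\lambda_{km}\,\zeta_k^{\,\nu},\qquad
\lambda_{km}=\sum_{j=1}^{m}a_{j}\,e^{-\frac{2\pi i k}{m}j},
\]
so $v^{(k)}$ is an eigenvector with eigenvalue $\lambda_{km}$. Since the $v^{(k)}$ form a basis of $\C^m$, we get $\sigma(T_m(a))=\{\lambda_{km}\}_{k=1}^m$, which is the first assertion.

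Finally, under the extra hypotheses $a_j\in\R$ and $a_{-j}=a_j$, I would show the imaginary part of $\lambda_{km}$ vanishes by pairing the index $j$ with $m-j$. By periodicity and symmetry $a_{m-j}=a_{-j}=a_j$, while $\sin\frac{2\pi k(m-j)}{m}=-\sin\frac{2\pi k j}{m}$, so the sine contributions at $j$ and $m-j$ cancel; the self-paired indices $j=m$ and, when $m$ is even, $j=m/2$ contribute sine values equal to $0$. Writing $e^{-2\pi i kj/m}=\cos\frac{2\pi kj}{m}-i\sin\frac{2\pi kj}{m}$ and discarding the (now vanishing) imaginary sum then gives $\lambda_{km}=\sum_{j=1}^{m}a_j\cos\frac{2\pi k}{m}j$, which is \eqref{4.10}.

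There is no genuine obstacle here: the only point demanding care is the index bookkeeping in the change of variables $\ell=\nu-j$, namely the justification that summing a product of two $m$-periodic sequences over any block of $m$ consecutive integers gives the same value. Everything else is the standard diagonalization of a circulant matrix, supplemented by the elementary symmetry argument for the real case.
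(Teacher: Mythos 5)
Your proof is correct. It reaches the same conclusion as the paper but by a slightly different mechanism: the paper writes $T_m(a)=a_0+a_{-1}Z+\dots+a_{-(m-1)}Z^{m-1}$, where $Z$ is the cyclic shift (permutation) matrix, invokes the known spectrum $\sigma(Z)=\{e^{2\pi ij/m}\}_{j=0}^{m-1}$ together with the Spectral Mapping Theorem, and then re-indexes the resulting polynomial values using the periodicity of $a$; you instead verify directly that each discrete Fourier vector $v^{(k)}$ is an eigenvector, with the same re-indexing argument absorbed into your change of variables $\ell=\nu-j$. Both are standard routes to the circulant diagonalization. Yours is more self-contained (no appeal to the spectral mapping theorem or to prior knowledge of $\sigma(Z)$) and explicitly produces a full eigenbasis, so the multiplicities come for free; the paper's version is shorter once those standard facts are granted. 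Your explicit cancellation argument for the real symmetric case, pairing $j$ with $m-j$ and checking the self-paired indices, fills in what the paper dismisses as obvious, and is correct.
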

\begin{proof}
It is easy to see that
$$ T_m(a)=a_0+a_{-1}Z+a_{-2}Z^2+\ldots+a_{-(m-1)}Z^{m-1}, $$
where $Z$ is the permutation matrix
\[
Z=
\begin{bmatrix}
0 & 1 & \ldots & 0 & 0& 0 \\
0 & 0 & 1 &\ldots& 0& 0 \\
0 & 0 & 0 & 1&\ldots & 0 \\
\ldots&\ldots&\ldots&\ldots& \ldots & \ldots \\
0 & 0 & 0 & \ldots& 0 & 1 \\
1 &\ldots & \ldots & \ldots & 0 & 0
\end{bmatrix}.
\]
The spectrum of $Z$ is well known, namely,
$$ \sigma(Z)=\{e^{\frac{2\pi i}{m}j}\}_{j=0}^{m-1}, $$
so by the Spectral Mapping Theorem the spectrum $\sigma(T_m(a))=\{\lambda_{km}\}_{k=1}^m$ is
 \begin{equation*}
\lambda_{km}=\sum_{l=0}^{m-1} a_{-l}e^{\frac{2\pi
ik}{m}l}=\sum_{l=0}^{m-1} a_{m-l}e^{\frac{2\pi
ik}{m}l}=\sum_{j=1}^{m} a_{j}e^{-\frac{2\pi ik}{m}j},
   \end{equation*}
as claimed. The second statement is obvious.
\end{proof}

It turns out that  the spectrum of the Schoenberg matrix $\kS_Y(g)$ can be computed explicitly when $Y$ is a set of vertices
of the regular $m$-gon on the complex plane.

  \begin{proposition}\label{simplex}
Let $Y_m(r)$ be the set of vertices of the regular $m$-gon of radius $r$, that is,
  \begin{equation}\label{Poligon2}
Y_m(r)= \{y_k(r)\}_{k=1}^m, \qquad y_k= y_k(r) := re^{\frac{2\pi ik}{m}}, \quad k=1,2,\ldots,m.
 \end{equation}
Given an even continuous function $g$ on $\R$, let $\kS_{Y_m(r)}(g)=\|g(|y_j-y_k|)\|_{j,k=1}^m$ be the corresponding Schoenberg's matrix.
Then for its spectrum one has
\begin{equation}\label{polygon3}
\sigma\bigl(\kS_{Y_m(r)}(g)\bigr) = \{\lambda_{km}(r)\}_{k=1}^m, \qquad
\lambda_{km}(r) = \,\sum_{j=1}^{m} g\biggl(2r\sin\frac{\pi j}m\biggr)\,\cos\frac{2\pi k}{m}j\,.
\end{equation}
In particular,
\begin{equation}\label{specttoep}
\lim_{m\to\infty} \frac{\lambda_{km}(r)}m = \widehat
g(k,r):=\frac1{2\pi}\int_0^{2\pi} g\biggl(2r\sin\frac{t}2\biggr)\cos
kt\,dt.
   \end{equation}
\end{proposition}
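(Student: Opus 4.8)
The plan is to recognize the Schoenberg matrix $\kS_{Y_m(r)}(g)$ as a circulant (symmetric Toeplitz) matrix and then apply Lemma \ref{toeplitz}, finishing with a Riemann-sum argument for the limiting relation \eqref{specttoep}. First I would compute the pairwise distances between vertices of the regular $m$-gon. Since $y_j = r e^{2\pi i j/m}$ lie on a circle of radius $r$, the chord length is
\begin{equation*}
|y_j - y_k| = r\,\bigl|e^{2\pi i j/m} - e^{2\pi i k/m}\bigr| = 2r\,\Bigl|\sin\tfrac{\pi(j-k)}m\Bigr|,
\end{equation*}
so the $(j,k)$ entry of $\kS_{Y_m(r)}(g)$ equals $g\bigl(2r|\sin\frac{\pi(j-k)}m|\bigr)$ and depends only on $j-k \pmod m$. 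Setting $a_\ell := g\bigl(2r|\sin\frac{\pi\ell}m|\bigr)$, the sequence $\{a_\ell\}$ is $m$-periodic, and since $g$ is even and $|\sin(-x)|=|\sin x|$ we have $a_{-\ell}=a_\ell$ and all $a_\ell$ real. Thus $\kS_{Y_m(r)}(g) = T_m(a)$ exactly in the sense of Lemma \ref{toeplitz}.

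Next I would invoke Lemma \ref{toeplitz}, specifically its second assertion \eqref{4.10}, which gives
\begin{equation*}
\lambda_{km}(r) = \sum_{j=1}^m a_j \cos\frac{2\pi k}m j = \sum_{j=1}^m g\Bigl(2r\sin\frac{\pi j}m\Bigr)\cos\frac{2\pi k}m j,
\end{equation*}
where the absolute value on $\sin\frac{\pi j}m$ may be dropped because $j$ ranges over $1,\dots,m$ and hence $\frac{\pi j}m\in(0,\pi]$, on which $\sin$ is nonnegative. This establishes \eqref{polygon3}. For the limiting statement \eqref{specttoep}, I would interpret $\frac{\lambda_{km}(r)}m = \frac1m\sum_{j=1}^m G\bigl(\frac{2\pi j}m\bigr)$ as a Riemann sum for the continuous, $2\pi$-periodic function $G(t) := g\bigl(2r\sin\frac t2\bigr)\cos kt$ (continuity of $g$ and evenness make $G$ well defined and continuous on $[0,2\pi]$). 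Since $\frac{2\pi}m\sum_{j=1}^m G\bigl(\frac{2\pi j}m\bigr)\to\int_0^{2\pi}G(t)\,dt$ as $m\to\infty$, dividing by $2\pi$ yields $\widehat g(k,r)=\frac1{2\pi}\int_0^{2\pi} g\bigl(2r\sin\frac t2\bigr)\cos kt\,dt$, as claimed.

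The steps here are essentially routine once the circulant structure is spotted, so there is no serious obstacle; the only points requiring a little care are verifying the symmetry $a_{-\ell}=a_\ell$ (which is exactly what forces the eigenvalues to be real and given by the cosine formula rather than the complex exponential formula) and confirming that the index substitution matching $\cos\frac{2\pi k}m j$ against the formula in Lemma \ref{toeplitz} is consistent. The Riemann-sum convergence is standard for a fixed $k$ with $m\to\infty$, using uniform continuity of $G$ on the compact interval $[0,2\pi]$.
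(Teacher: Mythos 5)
Your proposal is correct and follows essentially the same route as the paper: identify $\kS_{Y_m(r)}(g)$ as the symmetric circulant $T_m(a)$ with $a_\ell=g\bigl(2r|\sin\frac{\pi\ell}{m}|\bigr)$, apply Lemma \ref{toeplitz} (formula \eqref{4.10}) for the eigenvalues, and read \eqref{specttoep} as a Riemann sum. Your write-up merely supplies the chord-length computation and the evenness/periodicity checks that the paper leaves implicit.
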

\begin{proof}
It is clear that the Schoenberg matrix $\kS_{Y_m(r)}(g)$ is of the type considered in Lemma \ref{toeplitz}
$$
\kS_{Y_m(r)}(g)  = \|g(|y_j(r) - y_k(r)|)\|_{j,k=1}^m =
\left\|g\biggl(2r\sin\frac{\pi |k-j|}m \biggr)\right\|_{j,k=1}^m.
$$
Hence the result is immediate from \eqref{4.10}. Equality \eqref{polygon3} divided by $m$ gives an integral sum for the integral in
\eqref{specttoep}. The proof is complete.
\end{proof}

Given a sequence of (not necessarily different) numbers $\alpha = \{\alpha_k\}_{k=1}^l\subset \R$, $l\in \N\cup \{\infty\}$,
we denote by $\kappa_-(\alpha)$ a number of negative entries in  $\alpha$ counting multiplicity. Similarly, given a
symmetric matrix $A$ we denote by $\kappa_-(A)$ a number of its negative eigenvalues counting multiplicity.

  \begin{corollary}\label{cor4.6}
Let $\kK(r):=\{\widehat g(k,r)\}_{k\in\N}$. If $\,\sup_{r>0} \kappa_-(\kK(r)) = +\infty$,  then $\kappa_2^-(g) = +\infty$. In
particular, if $\kappa_-(\kK(r_0)) = +\infty$ for some $r_0 > 0$, then $\kappa_2^-(g) = +\infty$.
\end{corollary}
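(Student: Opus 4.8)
The plan is to realize many negative eigenvalues on the regular $m$-gon configurations $Y_m(r)$ from Proposition \ref{simplex} and then pass to the limit $m\to\infty$. The crucial input is the asymptotic density identity \eqref{specttoep}, namely $\lim_{m\to\infty}\lambda_{km}(r)/m=\widehat g(k,r)$: each Fourier coefficient $\widehat g(k,r)$ is the rescaled limit of the $k$-th eigenvalue of the Schoenberg matrix $\kS_{Y_m(r)}(g)$. Consequently a negative coefficient $\widehat g(k,r)<0$ forces the corresponding eigenvalue $\lambda_{km}(r)$ to be negative once $m$ is large, and the hypothesis $\sup_{r>0}\kappa_-(\kK(r))=+\infty$ supplies, at suitable radii, arbitrarily many such negative coefficients.

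Concretely, I would fix an arbitrary positive integer $N$ and use the hypothesis to select a radius $r>0$ together with $N$ distinct indices $k_1<k_2<\cdots<k_N$ in $\N$ for which $\widehat g(k_i,r)<0$, $i=1,\ldots,N$. Since $\lambda_{k_im}(r)/m\to\widehat g(k_i,r)<0$, for each $i$ there is a threshold $M_i$ with $\lambda_{k_im}(r)<0$ whenever $m\ge M_i$. Choosing $m\ge\max\{M_1,\ldots,M_N,k_N\}$ ensures simultaneously that all indices $k_1,\ldots,k_N$ lie in $\{1,\ldots,m\}$ and that $\lambda_{k_im}(r)<0$ for every $i$. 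This realizes $N$ negative values among the eigenvalue list $\{\lambda_{km}(r)\}_{k=1}^m$ of $\kS_{Y_m(r)}(g)$, whence $\kappa^-(g,Y_m(r))\ge N$, so $\kappa_2^-(g)\ge N$; letting $N\to\infty$ gives $\kappa_2^-(g)=+\infty$. The ``in particular'' clause is immediate, since $\kappa_-(\kK(r_0))=+\infty$ for a single $r_0$ already forces $\sup_{r>0}\kappa_-(\kK(r))=+\infty$.

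The one step that deserves care, and which I regard as the only real (though minor) obstacle, is the multiplicity bookkeeping. By Lemma \ref{toeplitz} the matrix $\kS_{Y_m(r)}(g)$ is circulant, with spectrum the list $\{\lambda_{km}(r)\}_{k=1}^m$ indexed by $k$ and with mutually orthogonal Fourier eigenvectors; thus distinct indices contribute to the multiplicity count independently. This is what allows $N$ distinct indices $k_i$ to produce $N$ negative eigenvalues counted with multiplicity, even though the numbers $\lambda_{km}(r)$ coincide in pairs (because $\lambda_{km}(r)=\lambda_{(m-k)m}(r)$ by the evenness of the cosine in \eqref{polygon3}). Hence the count is secured by the orthogonal-eigenvector structure rather than by numerical distinctness of the eigenvalues, and the corollary follows directly from Proposition \ref{simplex}.
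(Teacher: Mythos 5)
Your proposal is correct and follows essentially the same route as the paper: for arbitrary $N$ pick $r$ with $N$ negative coefficients $\widehat g(k_j,r)$ and use the limit \eqref{specttoep} to make the corresponding eigenvalues $\lambda_{k_j m}(r)$ of $\kS_{Y_m(r)}(g)$ negative for large $m$. Your extra remark on multiplicity bookkeeping (distinct indices $k$ contributing independently to the list $\{\lambda_{km}(r)\}_{k=1}^m$ via the orthogonal circulant eigenvectors) is a valid clarification of a point the paper passes over silently.
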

  \begin{proof}
By the assumption, for an arbitrary $N\in\N$ there is $r>0$ so that $\kK(r)$ contains at least $N$ negative
numbers $\{\widehat g(k_j,r)\}_{j=1}^N$. By Proposition \ref{simplex} for large enough $m$
$$ \lambda_{k_j,m}<0, \qquad j=1,2,\ldots,N, $$
so there are at least $N$ negative eigenvalues counting multiplicity of the Schoenberg matrix $\kS_{Y_m(r)}(g)=\|g(|y_j-y_k|)\|_{j,k=1}^m$.
Hence  $\kappa_2^-(g) = +\infty$, as claimed.
     \end{proof}
Clearly, if the measure $\nu$ in \eqref{schoenberg1New} is absolutely continuous, $\nu(du)=p(u)du$,
then by the Riemann--Lebesgue Lemma $g(t)\to 0$ as $t\to\infty$. So, Theorem \ref{negativeeigen2} applies and $\kappa_2^-(g)=+\infty$
as long as $g\notin\Phi_2$. In what follows we will focus upon the opposite case when $\nu$ contains a singular component.

We begin with the simplest case $\nu=\delta\{s\}$.
  \begin{proposition}
Let $g_s(r)=\cos sr$, $s>0$. Then $\kappa_2^-(g_s) =+\infty$.
  \end{proposition}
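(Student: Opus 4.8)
The plan is to apply Corollary \ref{cor4.6}: it suffices to show that the collection $\kK(r)=\{\widehat{g_s}(k,r)\}_{k\in\N}$ from \eqref{specttoep} contains arbitrarily many negative members as $r$ ranges over $\R_+$, i.e.\ $\sup_{r>0}\kappa_-(\kK(r))=+\infty$. So the first step is to evaluate these coefficients explicitly for $g_s(r)=\cos(sr)$. With $a:=2sr$ and the substitution $u=t/2$ I would compute
$$
\widehat{g_s}(k,r)=\frac1{2\pi}\int_0^{2\pi}\cos\Bigl(2sr\sin\tfrac{t}{2}\Bigr)\cos kt\,dt=\frac1\pi\int_0^\pi\cos(a\sin u)\cos(2ku)\,du .
$$
Then I would recognize the right-hand side as a Bessel integral: starting from $J_{2k}(a)=\frac1\pi\int_0^\pi\cos(2ku-a\sin u)\,du$, expanding the cosine, and noting that the term $\sin(2ku)\sin(a\sin u)$ is antisymmetric under $u\mapsto\pi-u$ and hence integrates to zero over $[0,\pi]$, one obtains the clean identity
$$
\widehat{g_s}(k,r)=J_{2k}(2sr),\qquad k\in\N .
$$

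With this identity in hand, the desired statement reduces to a purely Bessel-function fact: as $z=2sr\to\infty$, the number of indices $k\ge1$ with $J_{2k}(z)<0$ tends to infinity. This is where the real work sits. I would establish it from the uniform (Debye) asymptotics of $J_\nu(z)$ in the oscillatory regime. Fixing a small $\delta\in(0,1)$ and restricting to orders $\nu\in[1,\delta z]$, one has $J_\nu(z)=c\,(z^2-\nu^2)^{-1/4}\cos(\omega(\nu)-\tfrac\pi4)+O(z^{-3/2})$ with $\omega(\nu)=\sqrt{z^2-\nu^2}-\nu\arccos(\nu/z)$ and $\omega'(\nu)=-\arccos(\nu/z)$. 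Since the amplitude is of order $z^{-1/2}$ and dominates the error for large $z$, the sign of $J_\nu(z)$ agrees with that of $\cos(\omega(\nu)-\tfrac\pi4)$. Because $\omega$ is strictly monotone with $|\omega'|$ bounded away from $0$ on $[1,\delta z]$, its total variation there is of order $z$, so $\cos(\omega-\tfrac\pi4)$ runs through $\asymp z$ sign changes; sampling at the even integers $\nu=2k$ then captures $\asymp z$ samples lying in negative lobes, and this count grows without bound.

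The main obstacle is precisely this counting step: one must control the uniformity of the asymptotic expansion in $\nu$ and rule out pathological alignment between the sampling lattice $\{2k\}$ and the lobe structure of $\cos(\omega-\tfrac\pi4)$. A convenient device is to take $\delta$ small, so that $|\omega'|=\arccos(\nu/z)\approx\tfrac\pi2$ throughout $[1,\delta z]$; then consecutive even-order samples differ in phase by about $\pi$ and hence tend to alternate in sign, which makes the lower bound $\asymp z$ on the number of negative $J_{2k}(z)$ transparent. Once $\sup_{r>0}\kappa_-(\kK(r))=+\infty$ is secured, Corollary \ref{cor4.6} yields $\kappa_2^-(g_s)=+\infty$ at once, completing the proof.
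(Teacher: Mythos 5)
Your reduction is exactly the paper's: the identity $\widehat{g_s}(k,r)=J_{2k}(2sr)$ (Bessel's integral, with the odd part killed by $u\mapsto\pi-u$) is precisely \eqref{cosbes}, and via Corollary \ref{cor4.6} everything hinges on showing that the number of indices $k$ with $J_{2k}(z)<0$ is unbounded as $z\to\infty$. Where you diverge is in how you prove that Bessel fact, and this is where your argument has a real gap. The Debye asymptotics give $J_{2k}(z)=A_k\cos(\omega(2k)-\tfrac\pi4)+O(z^{-3/2})$ with $A_k\asymp z^{-1/2}$, so the sign of $J_{2k}(z)$ is only decided when $|\cos(\omega(2k)-\tfrac\pi4)|\gg z^{-1}$. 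Your device of taking $\delta$ small forces the phase increment $\omega(2k+2)-\omega(2k)\approx 2\omega'(2k)$ to be $\approx -\pi$, i.e.\ you are sampling $\cos$ exactly at the aliasing threshold. In that regime ``alternation'' of the leading term tells you only that one of each consecutive pair of leading terms is $\le 0$; it does \emph{not} exclude the worst case in which $\omega(2k)-\tfrac\pi4$ sits within $O(z^{-1})$ of $\tfrac\pi2 \pmod\pi$ for long stretches of $k$ (with increment exactly $-\pi$ this would persist forever once it happens once), in which case every leading term is swamped by the error and no sign is determined. To close this you must use the convexity $\omega''(\nu)=(z^2-\nu^2)^{-1/2}>0$: the increments drift by $\approx 4k/z$ per step, so the reduced phases $\omega(2k)\bmod\pi$ equidistribute and only a small fraction of $k\le\delta z$ can be ``bad.'' That quantitative step is the actual content of your counting claim and is missing; as written, ``tend to alternate in sign, which makes the lower bound transparent'' asserts the conclusion rather than proving it.

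For comparison, the paper avoids asymptotics entirely. From the recurrence $J_{p-1}(x)+J_{p+1}(x)=\tfrac{2p}{x}J_p(x)$ applied at $p=2k\pm1$ and $p=2k$ it derives the identity
\begin{equation*}
\frac{J_{2k-2}(x)}{2k-1}+4k\Bigl(\frac{1}{4k^2-1}-\frac{2}{x^2}\Bigr)J_{2k}(x)+\frac{J_{2k+2}(x)}{2k+1}=0 ,
\end{equation*}
valid identically in $x$. Whenever $x^2>2(4k^2-1)$ all three coefficients are positive, so (choosing $x$ off the countable set of zeros of the $J_{2k}$) at least one of $J_{2k-2}(x),J_{2k}(x),J_{2k+2}(x)$ is strictly negative. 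Taking $x\approx 9N$ and $N$ disjoint such triples inside $\{J_{2p}(x)\}_{p\le 3N}$ gives at least $N$ negative terms with no error estimates to control. Your route can certainly be completed, and it would in fact yield the stronger count $\asymp z$ of negative terms versus the paper's $\asymp z/9$, but you should either supply the equidistribution argument for the phases or switch to the recurrence identity, which settles the matter in three lines.
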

  \begin{proof}
To apply Corollary  \ref{cor4.6}  we compute  the cosine Fourier coefficients
of the function $g_s\bigl(2r\sin\frac{t}2\bigr)$. Fortunately, this can be done explicitly. Precisely,
we have (see \cite[p. 21]{Wat})
  \begin{equation}\label{cosbes}
\widehat g_s(k,r) := \frac1{2\pi}\int_0^{2\pi}
\cos\biggl(2sr\sin\frac{t}2\biggr)\cos kt\,dt =
 \frac1{\pi}\int_0^{\pi} \cos(2sr\sin\theta)\cos 2k\theta\,d\theta = J_{2k}(2sr).
  \end{equation}
According to Corollary \ref{cor4.6} the problem is reduced to the study of a number of negative terms in the sequence
$\{J_{2k}(x)\}_{k=1}^\infty$ for $x>0$ large enough. Our argument relies heavily on the well-known identity \cite[p. 17]{Wat}
\begin{equation}\label{recurbes}
\frac{J_{p-1}(x)+J_{p+1}(x)}{p}=\frac{2J_p(x)}{x}.
\end{equation}
Write \eqref{recurbes} with $p=2k-1$, $p=2k+1$
\begin{equation*}
\begin{split}
\frac{J_{2k-2}(x)}{2k-1} + \frac{J_{2k}(x)}{2k-1} &=\frac{2J_{2k-1}(x)}{x}, \\
\frac{J_{2k}(x)}{2k+1} + \frac{J_{2k+2}(x)}{2k+1} &=\frac{2J_{2k+1}(x)}{x},
\end{split}
\end{equation*}
take their sum, and apply again \eqref{recurbes} with $p=2k$ to obtain
\begin{equation}\label{recurbes1}
\begin{split}
\frac{J_{2k-2}(x)}{2k-1}+\frac{4k}{4k^2-1}\,J_{2k}(x)+\frac{J_{2k+2}(x)}{2k+1} &=\frac2{x}\bigl(J_{2k-1}(x)+J_{2k+1}(x)\bigr), \\
\frac{J_{2k-2}(x)}{2k-1}+4k\biggl(\frac{1}{4k^2-1}-\frac{2}{x^2}\biggr)\,J_{2k}(x)+\frac{J_{2k+2}(x)}{2k+1} &\equiv 0.
\end{split}
\end{equation}

Denote by $Z$ a set of all positive roots of at least one function $J_{2k}$, $k\in\N$. We assume later on that $x\notin Z$, so $J_{2k}(x)\not=0$
for all $k$ (note that $Z$ is a countable subset of $\R_+$).

Given $N\in\N$ put $x=9N+\varepsilon_N$, $0<\varepsilon_N<1$, so that $x\notin Z$. It is clear that
$$ \frac{1}{4k^2-1}-\frac{2}{x^2}>0, \qquad k=1,2,\ldots,3N, $$
so by \eqref{recurbes1} at least one of the numbers $J_{2k-2}(x)$, $J_{2k}(x)$, $J_{2k+2}(x)$ is negative. There are exactly
$N$ such triples in the set $\{J_{2p}(x)\}_{p=1}^{3N}$, so
\begin{equation}
\kappa_-\bigl(\{J_{2p}(x)\}_{p=1}^{3N}\bigr)\ge N.
\end{equation}
Hence, by Corollary \ref{cor4.6}, $\kappa_2^-(g_s)=+\infty$, as claimed.
\end{proof}

\begin{remark}\label{omegasquare}
It is easy to see that the same conclusion holds for the function
$$ g_s^2(r)=\cos^2 sr=\frac{1+\cos 2sr}2\,. $$
Indeed, the Schoenberg matrix $\kS_X(g_s^2)$ is the rank one perturbation of the Schoenberg matrix $\kS_X(g_s)$, and the latter
can have arbitrarily many negative eigenvalues for an appropriate choice of the set $X\subset\R^{2}$.
\end{remark}

We show that the same conclusion remains valid within a certain class of Schoenberg's measures.

\begin{proposition}
Suppose that for a function $g$ $\eqref{schoenberg1New}$ the support of $\nu$ is separated from the origin, i.e.,
$\supp\nu\subset [a,\infty)$ for some $a>0$. Next, assume that
\begin{equation}\label{nontozero}
l(\nu):=\limsup_{r\to\infty} |h(r)|>0, \qquad h(r):=\frac1{2\pi^{3/2}}\,\int_0^\infty \frac{\cos\left(2rs-\frac{\pi}4\right)}{\sqrt{s}}\,\nu(ds).
\end{equation}
Then $\kappa_2^-(g)=+\infty$.
\end{proposition}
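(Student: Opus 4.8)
The plan is to reduce everything to Corollary \ref{cor4.6} by computing the cosine Fourier coefficients $\widehat g(k,r)$ of the function $t\mapsto g(2r\sin\frac t2)$ explicitly. Starting from the Schoenberg representation \eqref{schoenberg1New} one has $g(2r\sin\frac t2)=\int_0^\infty\cos(2ru\sin\frac t2)\,\nu(du)$, and since $\nu$ is finite the integration in $u$ may be interchanged with the Fourier integral in $t$. Formula \eqref{cosbes} (with $s$ replaced by $u$) then yields
\begin{equation*}
\widehat g(k,r)=\int_0^\infty J_{2k}(2ru)\,\nu(du),\qquad k\in\N,
\end{equation*}
so that the problem becomes: for a suitable choice of $r$, exhibit arbitrarily many negative terms in the sequence $\{\widehat g(k,r)\}_{k\ge1}$.

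Next I would insert the leading-order Hankel asymptotics
$$ J_{2k}(x)=\sqrt{\frac2{\pi x}}\,\cos\Bigl(x-k\pi-\frac\pi4\Bigr)+(\text{error})=(-1)^k\sqrt{\frac2{\pi x}}\,\cos\Bigl(x-\frac\pi4\Bigr)+(\text{error}),\qquad x=2ru. $$
Integrating the main term against $\nu(du)$ reproduces exactly the integral defining $h(r)$ in \eqref{nontozero}, giving
$$ \widehat g(k,r)=(-1)^k\,\frac{2\pi}{\sqrt r}\,h(r)+\mathcal R_k(r). $$
Thus the leading term \emph{alternates in sign} with $k$, and as long as $h(r)\ne0$ and the remainder is negligible, roughly half of the coefficients $\widehat g(1,r),\dots,\widehat g(K,r)$ are negative, their sign being that of $(-1)^k\sgn h(r)$.

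To make this rigorous I would control the remainder. The first correction in the Hankel expansion carries a factor $\frac{4(2k)^2-1}{8x}$, so the relative error in $J_{2k}(2ru)$ is $O\bigl(k^2/(ru)\bigr)$ uniformly for $2k\le\delta\sqrt{ru}$; integrating against $\nu$ and using $\supp\nu\subset[a,\infty)$, which guarantees $\int u^{-3/2}\,\nu(du)\le a^{-3/2}\nu(\R_+)<\infty$ (and likewise makes $h(r)$ well defined), gives $|\mathcal R_k(r)|\le C\,k^2 r^{-3/2}$. Now invoke the $\limsup$ hypothesis \eqref{nontozero}: pick $r_n\to\infty$ with $|h(r_n)|\ge l(\nu)/2>0$. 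Since the main term has modulus $\ge \pi l(\nu)/\sqrt{r_n}$ while $|\mathcal R_k(r_n)|\le Ck^2 r_n^{-3/2}$, the main term dominates whenever $k\le c\sqrt{r_n}$ for a small constant $c=c(l(\nu),C,a)$; for such $k$ one then has $\sgn\widehat g(k,r_n)=\sgn\bigl((-1)^k h(r_n)\bigr)$. Consequently $\kappa_-(\kK(r_n))\ge\frac12\lfloor c\sqrt{r_n}\rfloor\to\infty$, whence $\sup_{r>0}\kappa_-(\kK(r))=+\infty$ and Corollary \ref{cor4.6} delivers $\kappa_2^-(g)=+\infty$.

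The main obstacle is the uniformity of the Bessel asymptotics: I must secure the $O\bigl(k^2/(ru)\bigr)$ bound on the relative error with a constant independent of both $k$ and $u$ over the relevant ranges $1\le k\le c\sqrt r$ and $u\in\supp\nu\subset[a,\infty)$. This is precisely where the separation of $\supp\nu$ from the origin is essential: it forces $x=2ru\ge 2ar$, so that $2k=O(\sqrt x)$ keeps the argument away from the Bessel transition region $x\approx\nu$ and the standard remainder estimate applies with a uniform constant.
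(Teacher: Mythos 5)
Your proposal follows essentially the same route as the paper's proof: reduce to Corollary \ref{cor4.6}, write $\widehat g(k,r)$ as $\int_0^\infty J_{2k}(2rs)\,\nu(ds)$ via \eqref{cosbes}, split off the alternating Hankel main term $(-1)^k h(r)/\sqrt r$ (up to normalization), bound the remainder by $Ck^2r^{-3/2}$ using $\supp\nu\subset[a,\infty)$, and evaluate along a sequence $r_j$ realizing the $\limsup$ in \eqref{nontozero} to get unboundedly many negative coefficients. The only place the paper is more concrete is the uniformity issue you flag as the main obstacle: it invokes Krasikov's explicit bound $|\varepsilon_{2k}(x)|\le c_1k^2x^{-3/2}$ (uniform in $x>0$) rather than arguing from the Hankel expansion, and it contents itself with the range $k\le r_j^{1/4}$ instead of your sharper $k\le c\sqrt{r_j}$ --- either suffices.
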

\begin{proof}
It follows from \eqref{schoenberg1New} and \eqref{cosbes} that
\begin{equation}
\widehat g(k,r)=\frac1{2\pi}\int_0^\infty J_{2k}(2rs)\nu(ds).
\end{equation}
We wish to show that the number of negative terms in the sequence $\{\widehat g(k,r)\}_{k\in\N}$ grows unboundedly as $r\to\infty$.

Write the asymptotic expansion \cite[formula (4.8.5)]{AAR}
\begin{equation*}
J_{2k}(x)=\sqrt{\frac2{\pi x}}\cos\left(x-k\pi-\frac{\pi}4\right)+\varepsilon_{2k}(x)=
(-1)^k\sqrt{\frac2{\pi x}}\cos\left(x-\frac{\pi}4\right)+\varepsilon_{2k}(x),
\end{equation*}
and so
\begin{equation}
\widehat g(k,r)=(-1)^k\frac{h(r)}{\sqrt{r}}+\frac1{2\pi}\int_0^\infty \varepsilon_{2k}(2rs)\nu(ds).
\end{equation}
The following uniform bound for the reminder $\varepsilon_{2k}$ is known \cite[Theorem 10]{kras}
$$ \varepsilon_{2k}(x)\le c_1\frac{k^2}{x^{3/2}}, \qquad x>0, $$
where $c_p$ below stand for some absolute constants . Hence
\begin{equation}\label{gsign}
|\sqrt{r}\widehat g(k,r)-(-1)^kh(r)|\le c_2\,\frac{k^2}{r}\int_a^\infty\frac{\nu(ds)}{s^{3/2}}\,.
\end{equation}
By assumption \eqref{nontozero} there is a sequence of positive numbers $\{r_j\}_{j\ge1}$ so that
\begin{equation*}
\lim_{j\to\infty}r_j=+\infty, \qquad |h(r_j)|\ge \frac{l(\nu)}2>0.
\end{equation*}
Take large enough $j_0=j_0(\nu)$ so that
$$ c_2\,\int_a^\infty\frac{\nu(ds)}{s^{3/2}}<\frac{l(\nu)}4\,r_j^{1/2}, \quad j\ge j_0. $$
It follows now from \eqref{gsign} that
$$ |\sqrt{r}\widehat g(k,r)-(-1)^kh(r)|<\frac{l(\nu)}4, \qquad k=1,2,\ldots,[r_j^{1/4}], \quad j\ge j_0, $$
and the number of negative terms in $\{\widehat g(k,r)\}_{k\in\N}$ is at least $c_3r_j^{1/4}$, so it grows to infinity as $j\to\infty$.
An application of Corollary \ref{cor4.6} completes the proof.
\end{proof}

\begin{remark}
As a matter of fact, the assumption $\supp\nu\subset [a,\infty)$ can be relaxed to
\begin{equation}
\int_0^\infty \frac{\nu(ds)}{\sqrt{s}}<\infty, \qquad \int_x^\infty \frac{\nu(ds)}{s^{3/2}}=o(x^{-1}), \quad x\to 0.
\end{equation}
\end{remark}

\begin{example}
Let
$$ \nu(ds)=\sum_{k=1}^\infty a_k\delta\{s_k\}, \qquad \inf_k s_k>0, $$
with $a_k\ge0$, $\{a_k\}_{k\ge1}\in\ell^1(\N)$.
Then \eqref{nontozero} is true since $h$ is almost periodic on $\R_+$.
\end{example}

\begin{remark}
The similar problem for a general Bochner's class $P_n$ of positive-definite functions on $\R^n$ can be easily resolved.
Recall that a continuous function $f$ on $\R^n$ belongs to $P_n$ if for an arbitrary finite set $\{x_1,\dots,x_m\}$, $x_k\in \R^n$,
and $\{\xi_1,\dots,\xi_m\}\in\C^m$
\begin{equation*}
\sum_{k,j=1}^{m} f(x_k-x_j)\xi_j\overline{\xi}_k\ge 0.
\end{equation*}
Each function from the Bochner class admits the following integral representation
\begin{equation}\label{bochner}
f(x)=\int_{\R^n} e^{i(x,t)_n}\, \sigma(dt),
\end{equation}
where $\sigma$ is a finite positive Borel measure on $\R^n$.

Although inclusion of the classes $P_n$ has no sense now, let us write $f_{n+1}\in P_{n-1}\backslash P_{n}$ for a function $f_{n}$ on $\R^{n}$ if
$f_{n}\notin P_{n}$ but its restriction on $\R^{n-1}$
$$ f_{n-1}(x_1,x_2,\ldots,x_{n-1}):=f_{n}(x_1,x_2,\ldots,x_{n-1},0)\in P_{n-1}. $$

Let $\mu$ be a finite positive Borel measure on $\R^{n}$ with the only atom at the origin $\mu\{0\}>0$ and the Fourier transform
$f_{n}$ \eqref{bochner}. Put
\begin{equation}\label{charge}
\mu_1=\mu-\varepsilon\,\delta_a, \qquad 0<\varepsilon<\mu\{0\}, \quad a=(0,\ldots,0,1)\in\R^{n},
\end{equation}
a finite charge (sign measure) on $\R^{n}$. By the construction, $\mu$ is not a measure, so the Fourier transform
\begin{equation}\label{ftcharge}
g_{n}(x)=\int_{\R^{n}} e^{i(x,t)_{n}}\,\mu_1(dt)=f_{n}(x)-\varepsilon\,e^{ix_{n}}\notin P_{n}.
\end{equation}
So
$$ g_{n-1}(x)=f_{n-1}(x)-\varepsilon=\int_{\R^{n-1}} e^{i(x,t)_{n-1}}\,\tilde\mu(dt)-\varepsilon, $$
where $\tilde\mu$ is a projection of $\mu$ on $\R^{n-1}$, that is, $\tilde\mu(E)=\mu(E\times\R)$ for each Borel set $E\subset\R^{n-1}$.
By the choice of $\varepsilon$ \eqref{charge} and $\tilde\mu\{0\}\ge\mu\{0\}$ we have $g_n\in P_{n-1}$, so $g_{n}\in P_{n-1}\backslash P_{n}$.

Given a finite set $Y=\{y_j\}_{j=1}^N\subset\R^{n}$ and a function $g$ on $\R^{n}$, we define a finite Schoenberg matrix by
$\kS_Y(g)=\|f(y_i-y_j)\|_{i,j=1}^{N}$, and define the values $\kappa^-(g,Y)$ and $\kappa_{n}^-(g)$ as above in Section~1.
It follows from \eqref{ftcharge} that $\kS_Y(g_{n})=T_1-T_2$, where $T_1\ge0$ and ${\rm rk}\,T_2=1$. So $\kappa(g_{n},Y)\ge 1$, and for some
particular choice of $Y$ $\kappa(g_{n},Y)=1$. Hence, $\kappa_{n}^-(g_{n})=1$, as needed.
     \end{remark}
%
%


 \bigskip
 \bigskip

Leonid Golinskii, \\
\emph{Mathematics Division, Low Temperature Physics Institute, NAS of Ukraine},\\
\emph{47 Lenin ave.},\\
\emph{61103 Kharkov, Ukraine}\\
\emph{e-mail:} golinskii@ilt.kharkov.ua \\

Mark Malamud,\\
\emph{Institute of Applied Mathematics and Mechanics, NAS of Ukraine},\\
\emph{74 R. Luxemburg str.},\\
\emph{83114 Donetsk, Ukraine}\\
 \emph{e-mail:} mmm@telenet.dn.ua
 \\

Leonid Oridoroga, \\
\emph{Donetsk National University}, \\
\emph{24, Universitetskaya Str.}, \\
\emph{83055  Donetsk, Ukraine}  \\
\emph{e-mail:} oridoroga@skif.net
\\

\end{document}